\newtheorem{theorem}{Theorem}
\newtheorem{lemma}{Lemma}
\newtheorem{remark}{Remark}
\newtheorem{definition}{Definition}
\newtheorem{proposition}{Proposition}
\newtheorem{assumption}{Assumption}
\DeclareMathOperator*{\argmin}{argmin}
\title{\LARGE \bf
Byzantine-Resilient Distributed Optimization of \\ Multi-Dimensional Functions
}
\author{Kananart Kuwaranancharoen, Lei Xin, Shreyas Sundaram
\thanks{This research was supported by NSF CAREER award 1653648. The authors are with the School of Electrical and Computer Engineering at Purdue University.  Email: {\tt\{kkuwaran,lxin,sundara2\}@purdue.edu}. Both of the first two authors contributed equally to this paper.}
}
\begin{document}

\maketitle
\thispagestyle{empty}
\pagestyle{empty}

\begin{abstract}
The problem of distributed optimization requires a group of agents to reach agreement on a parameter that minimizes the average of their local cost functions using information received from their neighbors. While there are a variety of distributed optimization algorithms that can solve this problem, they are typically vulnerable to malicious (or ``Byzantine'') agents that do not follow the algorithm.  Recent attempts to address this issue focus on single dimensional functions, or provide analysis under certain assumptions on the statistical properties of the functions at the agents. In this paper, we propose a resilient distributed optimization algorithm for multi-dimensional convex functions.  Our scheme involves two filtering steps at each iteration of the algorithm: (1) distance-based and (2) component-wise removal of extreme states. We show that this algorithm can mitigate the impact of up to $F$ Byzantine agents in the neighborhood of each regular node, without knowing the identities of the Byzantine agents in advance.  In particular, we show that if the network topology satisfies certain conditions, all of the regular states are guaranteed to asymptotically converge to a bounded region that contains the global minimizer.
\end{abstract}

\section{Introduction} 
The problem of distributed optimization requires a network of agents to reach agreement on a parameter that minimizes the average of their objective functions using local information received from their neighbors. This framework is motivated by various applications including machine learning, power systems, and robotic networks, 
and there are a variety of approaches to solve this problem \cite{nedic2014distributed,xin2019frost,gharesifard2013distributed,zhu2011distributed}.  However, these existing works typically make the assumption that all agents follow the prescribed protocol; indeed, such protocols fail if even a single agent behaves in a malicious or incorrect manner \cite{sundaram2018distributed}.

A handful of recent papers have considered this problem for the case where agent misbehavior follows prescribed patterns \cite{ravi2019case,wu2018data}. A more general (and serious) form of misbehavior is captured by the {\it Byzantine} adversary model from computer science, where misbehaving agents can send arbitrary (and conflicting) values to their neighbors at each iteration of the algorithm. Under such Byzantine behavior, it has been shown that it is impossible to guarantee computation of the true optimal point \cite{sundaram2018distributed,su2015byzantine}. Thus, some papers have formulated distributed optimization algorithms that allow the non-adversarial nodes to converge to a certain region surrounding the the true minimizer, regardless of the adversaries' actions  \cite{sundaram2018distributed,su2016fault,zhao2017resilient}.  However, the above works focus on single dimensional functions. The extension to general multi-dimensional functions remains largely open, however, since even the region containing the true minimizer of the functions is challenging to characterize in such cases \cite{kuwaranancharoen2018location}. 
The recent papers \cite{yang2019byrdie,yang2019Bridge} consider a vector version of the resilient decentralized machine learning problem by utilizing block coordinate descent.  Those papers show that the states of regular nodes will converge to the statistical minimizer with high probability, but the analysis is restricted to i.i.d training data across the network.\footnote{We note that there is also a branch of literature pertaining to optimization in a client-server architecture \cite{gupta2019byzantine} \cite{blanchard2017machine}; this differs from our setting where we consider a fully distributed network of agents.} 

In this paper, we propose an algorithm that extends the ``local-filtering'' dynamics proposed in \cite{sundaram2018distributed,su2016fault} (for single-dimensional convex functions) to the multi-dimensional case. However, this extension is non-trivial, as simply applying the filtering operations proposed in those papers to each coordinate of the parameter vector does not appear to yield clear guarantees.  Instead, we show that by having each node apply an additional filtering step on the parameter vectors that it receives from its neighbors at each step (based on the distance of those vectors from a commonly chosen reference point), one can recover certain performance guarantees in the face of Byzantine adversaries. 
 

\section{Notation and Terminology} \label{Notation and Terminology}
\subsection{General Notation}
Let $\mathbb R$ and $\mathbb N$ denote the set of real and natural numbers, respectively. For $N \in \mathbb{N}$, let $[N]$ denote the set $\{ 1, 2, \ldots, N \}$. Vectors are taken to be column vectors, unless otherwise noted. We use $[x]_{p}$ to represent the $p$-th component of a vector $x$. The cardinality of a set is denoted by $|\cdot|$, and the Euclidean norm on $\mathbb R^d$ is denoted by $\| \cdot \|_2$. We denote by $\langle u, v \rangle$ the Euclidean inner product of $u$ and $v$ i.e., $\langle u, v \rangle = u^Tv$ and 
by $\angle (u,v)$ the angle between vectors $u$ and $v$ i.e., $\angle (u,v) = \arccos \big( \frac{ \langle u, v \rangle }{\Vert u \Vert_2 \Vert v \Vert_2}  \big)$.
The Euclidean ball in $d$-dimensional space with center at $x_0$ and radius $r$ is denoted by $\mathcal{B}(x_0, r) \triangleq \{ x \in \mathbb{R}^d : \| x - x_0 \|_2 \leq r \} $. Given a convex function $f: \mathbb{R}^d \rightarrow \mathbb{R}$, the set of subgradients of $f$ at any point $x \in \mathbb{R}^d$ is denoted by $\partial f(x)$.

\subsection{Graph Theory}
We denote a network by a directed graph $\mathcal G = ( \mathcal{V}, \mathcal{E})$, which consists of the set of nodes  $\mathcal{V} = \{ v_{1}, v_{2}, \ldots, v_{N} \}$ and the set of edges $\mathcal{E} \subseteq \mathcal{V} \times \mathcal{V}$. If $(v_i, v_j) \in \mathcal{E}$, then node $v_j$ can receive information from node $v_i$. 
The in-neighbor and out-neighbor sets are denoted by $\mathcal{N}^{-}_{i} = \{ v_j \in \mathcal V: \; (v_{j}, v_{i}) \in \mathcal{E} \} $ and $\mathcal{N}^{+}_{i} = \{ v_j \in \mathcal V: \; (v_{i}, v_{j}) \in \mathcal{E} \} $, respectively. A path from node $v_{i}\in \mathcal V$ to node $v_{j}\in \mathcal V$  is a sequence of nodes $v_{k_1},v_{k_2}, \dots, v_{k_l}$ such that $v_{k_1}=v_i$, $v_{k_l}=v_j$ and $(v_{k_r},v_{k_{r+1}}) \in \mathcal{E}$ for $1\leq r \leq l-1$. Throughout the paper, the terms nodes and agents will be used interchangeably.

\begin{definition}
A graph $\mathcal G = ( \mathcal{V}, \mathcal{E})$ is
said to be rooted at node $v_i \in \mathcal{V}$ if for for all nodes $v_j \in \mathcal{V} \setminus \{v_i\}$, there is a path from $v_i$ to $v_j$. A graph is said to be rooted if it is rooted at some node $v_{i}\in \mathcal V$.
\end{definition}

We will rely on the following definitions from \cite{leblanc2013resilient}. 

\begin{definition}[$r$-reachable set]
For any given $r \in \mathbb{N}$, a subset of nodes $\mathcal S  \subseteq \mathcal V$ is said to be $r$-reachable if there exists a node $v_i \in \mathcal S$  such that $|\mathcal N^{-}_{i} \setminus \mathcal S| \geq r$.
\end{definition}

\begin{definition}[$r$-robust graphs] 
For $r \in \mathbb N$, a graph $\mathcal G$ is said to be $r$-robust if for all pairs of disjoint nonempty subsets $\mathcal{S}_1, \mathcal{S}_2 \subset \mathcal V$, at least one of $\mathcal{S}_1$ or $\mathcal{S}_2$ is $r$-reachable.
\end{definition}

The above definitions capture the idea that sets of nodes should contain a node that has a sufficient number of neighbors outside that set. This will be important for the {\it local} decisions made by each node in the network under our algorithm, and will allow information from the rest of the network to penetrate into different sets of nodes.

\subsection{Adversarial Behavior}
\begin{definition}
A node $v_j \in \mathcal V$ is said to be Byzantine if during each iteration of the prescribed algorithm, it is capable of sending arbitrary (and perhaps conflicting) values to different neighbors.
\end{definition}

 The set of Byzantine nodes is denoted by $\mathcal A \subset \mathcal V$. The set of regular nodes is denoted by  $\mathcal{R} = \mathcal{V}\setminus\mathcal{A}$.

The identities of the Byzantine agents are unknown to regular agents in advance. Furthermore, we allow the Byzantine agents know the entire topology of the network and functions equipped by the regular nodes (such worst case behavior is typical in the study of such adversarial models \cite{sundaram2018distributed, su2015byzantine,yang2019byrdie}). 

\begin{definition}[$F$-local model]
For $F \in \mathbb N$, we say that the set of adversaries $\mathcal{A}$ is an $F$-local set if $|\mathcal{N}^{-}_{i} \cap \mathcal{A} | \leq F$, for all $v_{i} \in \mathcal R$.
\end{definition}

Thus, the $F$-local model captures the idea that each regular node has at most $F$ Byzantine in-neighbors.

\section{Problem formulation}\label{sec:problem formulation}
Consider a group of $N$ agents $\mathcal{V}$ interconnected over a graph $\mathcal{G} = (\mathcal{V}, \mathcal{E})$. Each agent $v_i \in \mathcal{V}$ has a local convex cost function $f_i: \mathbb{R}^d \rightarrow \mathbb{R}$.  The objective is to collaboratively solve the following minimization problem:
\begin{align} 
    \mathop{\min}_{x \in \mathbb{R}^d} \frac{1}{N}\sum _{v_i \in \mathcal{V}} f_{i}(x)  \label{prob: min_all_f}
\end{align}
where $x$ is the common decision variable. 
We assume that nodes can only communicate with their immediate neighbors  to solve the above problem.  However, since Byzantine nodes are allowed to send arbitrary values to their neighbors at each iteration of any algorithm, it is not possible to solve  Problem \eqref{prob: min_all_f} under such misbehavior (since one is not guaranteed to infer any information about the true functions of the Byzantine agents) \cite{sundaram2018distributed, su2015byzantine}.  Thus, the optimization problem is recast into the following form:
\begin{align} 
    \mathop{\min}_{x \in \mathbb{R}^d} \frac{1}{|\mathcal R|}\sum _{v_i \in \mathcal R } f_{i}(x) .  \label{prob: regular node} 
\end{align}
We will now propose an algorithm that allows the regular nodes to approximately solve the above problem (as characterized later in the paper).  We will make the following assumption throughout.

\begin{assumption}
For all $v_i \in \mathcal{V}$, the functions $f_i(x)$ are convex, and the sets $\argmin f_i(x)$ are non-empty and bounded.   \label{ass: convex}
\end{assumption}

\section{A Resilient Distributed Optimization Algorithm} \label{sec:Algorithm}
The algorithm that we propose is stated as Algorithm~\ref{alg: resilient dec opt}. At each time-step $k$, each node $v_i \in \mathcal{V}$ maintains and updates a vector $x_i[k]$, which is its estimate of the solution to Problem~\eqref{prob: regular node}.  After presenting the algorithm, we describe each of the steps and the update rule.



\begin{algorithm}
\caption{Distance-MinMax Filtering Dynamics} \label{alg: resilient dec opt}
\textbf{Input} Network $\mathcal{G}$, 
functions $\{f_i\}_{i = 1}^{N}$, the parameter $F$
\begin{algorithmic}[1]
\State Each $v_i \in \mathcal{R}$ sets $x_i^* \gets$ \texttt{optimize}($f_i$) 
\State $\hat{x} \gets$ \texttt{resilient\_consensus} ($F, \; \{ x_i^* \}$)
\State Each $v_i \in \mathcal{R}$ sets $x_i[0] = x_i^*$
\For {$k \in \mathbb{N}$}  
\For {$v_i \in \mathcal R$}  \Comment{Implement in parallel}
\State \texttt{broadcast}($\mathcal{N}_i^+, x_i[k]$)
\State $\mathcal{S}_i[k] \gets$ \texttt{receive}($\mathcal{N}_i^{-}$)
\State $\mathcal{S}_i^{\text{dist}}[k] \gets$ \texttt{dist\_filter}($F, \; \hat{x}, \; \mathcal{S}_i[k]$)
\State $\mathcal{S}_i^{\text{mm}}[k] \gets$ \texttt{minmax\_filter}($F ,\; \mathcal{S}_i^{\text{dist}}[k]$)
\State $z_i[k] \gets$ \texttt{average}($ \mathcal{S}_i^{\text{mm}}[k]$)
\State $x_i[k+1] \gets$ \texttt{gradient}($f_i, z_i[k]$)
\EndFor
\EndFor
\end{algorithmic}
\end{algorithm}
Note that Byzantine nodes do not necessarily need to follow the above algorithm, and can update their states however they wish.  We now explain each step used in  Algorithm \ref{alg: resilient dec opt}.
\begin{enumerate}[1.]
    \item $x_i^* \gets$ \texttt{optimize} ($f_i$) 
    \\
    Each node $v_{i} \in \mathcal R$ finds the minimizer $x_i^*$ of its local function $f_i$ (using any appropriate algorithm).
    
    \item $\hat{x} \gets$ \texttt{resilient\_consensus} ($F, \; \{ x_i^* \}$) \\
    The nodes run a resilient consensus algorithm to calculate a consensus point (which we term an {\it auxiliary point}) $\hat{x} \in \mathbb{R}^d$, with each node $v_i \in \mathcal{R}$ setting its initial vector to be its individual minimizer $x_i^*$. For example, $d$ parallel versions of the the resilient scalar consensus algorithm from \cite{leblanc2013resilient} can be applied (one for each component of the parameter vector).  This is guaranteed to return a consensus value $\hat{x}$ that is in the smallest hyperrectangle containing all of the minimizers of the regular nodes' functions, regardless of the actions of any $F$-local set of adversaries (under the network conditions provided in the next section) \cite{leblanc2013resilient}.
    
    \item \texttt{broadcast} ($\mathcal{N}_i^+, x_i[k]$) \\
    Node $v_{i} \in \mathcal{R}$ broadcasts its current state $x_{i}[k]$ to its out-neighbors $\mathcal{N}_i^+$.
    
    \item $\mathcal{S}_i[k] \gets$ \texttt{receive}($\mathcal{N}_i^{-}$) \\
     Node $v_{i} \in \mathcal{R}$ receives the current states from its in-neighbors $\mathcal{N}_i^-$. So, at time step $k$, node $v_i$ possesses the set of states $\mathcal{S}_i[k] \triangleq \{ x_j[k]: j \in \mathcal{N}_i^- \} \cup \{ x_i[k] \}$.
    
    \item $\mathcal{S}_i^{\text{dist}}[k] \gets$ \texttt{dist\_filter} ($F, \; \hat{x}, \; \mathcal{S}_i[k]$) \\
    Node $v_i \in \mathcal R$ computes 
    \begin{align}
        D_{ij}[k] \triangleq \| x_j[k] - \hat{x} \|_2  \;\; \text{for} \;\; x_j[k] \in \mathcal{S}_i[k] \setminus \{ x_i[k] \}.    \label{def: D-metric}
    \end{align}
    Then, node $v_i$ removes the states $x_j[k] \in \mathcal{S}_i[k] \setminus \{ x_i[k] \}$ that produce the $F$-highest values of $D_{ij}[k]$. The remaining states $x_j[k]$ are stored in $\mathcal{S}_i^{\text{dist}}[k]$.
    
    \item $\mathcal{S}_i^{\text{mm}}[k] \gets$ \texttt{minmax\_filter} ($F ,\; \mathcal{S}_i^{\text{dist}}[k]$) \\
    Node $v_i$ further removes states that have extreme values in any of their components.  More specifically, node $v_i \in \mathcal R$ removes the state $x_j[k] \in \mathcal{S}_i^{\text{dist}}[k] \setminus \{ x_i[k] \}$ if there exists $p \in [d]$ such that $[ x_j[k] ]_p$ is in the $F$-highest or $F$-lowest values from the set of scalars $\big\{ [ x_l[k] ]_p : x_l[k] \in \mathcal{S}_i^{\text{dist}}[k] \setminus \{ x_i[k] \} \big\}$. The remaining states are stored in $\mathcal{S}_i^{\text{mm}}[k]$.
    
    \item $z_i[k] \gets$ \texttt{average} ($\mathcal{S}_i^{\text{mm}}[k]$) \\
    Each node $v_i \in \mathcal{R}$ computes
    \begin{equation}
        z_i[k] = \sum_{v_j \in \mathcal{N}_i^{\text{mm}} [k] } \frac{1}{|\mathcal{S}_i^{\text{mm}} [k]|} x_j[k]  \label{eq: z_i_calc}
    \end{equation}
    where $\mathcal{N}_i^{\text{mm}} [k] \triangleq \{ v_s : x_s[k] \in \mathcal{S}_i^{\text{mm}} [k] \}$.
    
    \item $x_i[k+1] \gets$ \texttt{gradient} ($f_i, z_i[k]$) \\
    Node $v_i \in \mathcal{R}$ computes the gradient update as follows:
    \begin{align}
        x_i[k+1] = z_i[k] - \eta[k] g_i[k]  \label{eqn: dynamic}
    \end{align}
    where $g_i[k] \in \partial f_i( z_i[k] )$ and $\eta[k]$ is the step-size at time-step $k$.
\end{enumerate}

\begin{remark}
The role of the auxiliary point $\hat{x}$ is to give the states $x_i[k]$ a ``sense of good direction''.  In other words, since adversarial nodes can try to pull the regular nodes away from the true minimizer, the auxiliary point provides a common reference point for each regular node with which to evaluate the states in its neighborhood.  This motivates the distance-based filtering step of Algorithm~\ref{alg: resilient dec opt} (in line 8), which removes the $F$ states that are furthest away from the auxiliary point at each time-step.  Note that the auxiliary point will, in general, be different from the true minimizer of interest.  We also note that the resilient consensus algorithm from \cite{leblanc2013resilient} for computing the auxiliary point will only provide asymptotic consensus in general.  The regular agents will converge to consensus exponentially fast under that algorithm, however, and thus we expect that running the resilient consensus algorithm to compute the auxiliary point simultaneously with the other update steps (given by lines 6-11) will also lead to the same guarantees.  In the interest of space, we do not provide that analysis here.
\end{remark}


\section{Assumptions and Main Results}  \label{sec: assumption result}

We will make the following assumptions in our analysis.

\begin{assumption}
There exists $L > 0$ such that $\| g_i(x) \|_2 \leq L$ for all $x \in \mathbb{R}^d$ and $v_i \in \mathcal V$, where $g_i(x) \in \partial f_i(x)$.   \label{ass: gradient_bound}
\end{assumption}

\begin{assumption}
The step-sizes used in line 11 of Algorithm~\ref{alg: resilient dec opt} satisfy $\lim_{k \to \infty} \eta[k] = 0$, $\eta[k+1] < \eta[k]$ for all $k$, and $\sum_{k=0}^{\infty} \eta [k] = \infty$.   \label{ass: step-size}
\end{assumption}

\begin{assumption} \label{ass: robust}
The underlying communication graph $\mathcal{G}$ is $\big( (2d+1)F+1 \big)$-robust, and the Byzantine agents form a $F$-local set. 
\end{assumption}

We will also use the following lemma from \cite{leblanc2013resilient}.

\begin{lemma} \label{lem: root}
Suppose a graph $\mathcal G$ satisfies Assumption~\ref{ass: robust}. Let $\mathcal G'$ be a graph obtained by removing $(2d + 1)F$ or fewer incoming edges from each node in $\mathcal G$. Then $\mathcal G'$ is rooted.
\end{lemma}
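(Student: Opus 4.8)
The plan is to prove the contrapositive with a short counting argument on ``closed'' vertex sets. First I would record a purely graph-theoretic fact: a digraph fails to be rooted exactly when it admits two disjoint nonempty subsets, each of which receives no incoming edge from its complement. Concretely, call a nonempty set $\mathcal{S} \subseteq \mathcal{V}$ \emph{in-closed} (with respect to a given digraph) if every in-neighbor of every node of $\mathcal{S}$ already lies in $\mathcal{S}$. For any node $v$, the set of vertices that can reach $v$ is in-closed, so in-closed sets always exist, and a digraph is rooted iff the intersection of all its in-closed sets is nonempty (a common vertex reaches everything, and conversely a root lies in every in-closed set by walking back along a path). If that intersection is empty, then there cannot be a unique minimal in-closed set, so there are two distinct minimal ones $\mathcal{M}_1, \mathcal{M}_2$; since the intersection of two in-closed sets is again in-closed whenever it is nonempty, minimality forces $\mathcal{M}_1 \cap \mathcal{M}_2 = \emptyset$. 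This produces two disjoint nonempty in-closed sets in $\mathcal{G}'$.

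Next I would bring in the hypotheses. Suppose, for contradiction, that $\mathcal{G}'$ is not rooted, and let $\mathcal{S}_1, \mathcal{S}_2$ be two disjoint nonempty in-closed sets of $\mathcal{G}'$ obtained as above. Applying the $\big( (2d+1)F+1 \big)$-robustness of $\mathcal{G}$ (Assumption~\ref{ass: robust}) to the disjoint nonempty pair $\mathcal{S}_1, \mathcal{S}_2$, at least one of them, say $\mathcal{S}_1$, is $\big( (2d+1)F+1 \big)$-reachable in $\mathcal{G}$; that is, there is a node $v_i \in \mathcal{S}_1$ with $|\mathcal{N}^{-}_{i} \setminus \mathcal{S}_1| \ge (2d+1)F+1$ in the original graph $\mathcal{G}$.

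Finally I would invoke the edge-removal bound: passing from $\mathcal{G}$ to $\mathcal{G}'$ deletes at most $(2d+1)F$ incoming edges at $v_i$, so $v_i$ retains at least $(2d+1)F+1 - (2d+1)F = 1$ in-neighbor outside $\mathcal{S}_1$ in $\mathcal{G}'$. But $\mathcal{S}_1$ is in-closed in $\mathcal{G}'$, hence has no incoming edge from $\mathcal{V} \setminus \mathcal{S}_1$ in $\mathcal{G}'$ — a contradiction. Therefore $\mathcal{G}'$ is rooted.

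The substantive step is the first one: establishing that non-rootedness yields two \emph{disjoint} in-closed sets. Once that structural fact is in hand, everything else is a one-line pigeonhole estimate using $(2d+1)F+1$ versus $(2d+1)F$. Equivalently, one may phrase the first step through the condensation of $\mathcal{G}'$ — a digraph is rooted iff its DAG of strongly connected components has a unique source, and two distinct sources are exactly the two disjoint in-closed sets needed. I do not anticipate any further obstacles, so the argument is short.
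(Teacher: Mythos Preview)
Your argument is correct. Note, however, that the paper does not actually supply its own proof of this lemma: it simply cites the result from \cite{leblanc2013resilient} (LeBlanc et al.). So there is no ``paper's proof'' to compare against in the strict sense.

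That said, the argument you give is precisely the standard one used for this class of results (including in the cited reference): characterize non-rootedness by the existence of two disjoint nonempty sets with no incoming edges from their complements, apply $r$-robustness to the pair to obtain a node with at least $r$ external in-neighbors in $\mathcal{G}$, and then observe that deleting at most $r-1$ incoming edges cannot eliminate all of them. Your ``in-closed set'' formulation and the condensation/source-SCC alternative are both clean ways to establish the first step, and the remaining counting is exactly the $(2d+1)F+1$ versus $(2d+1)F$ pigeonhole you describe. Nothing is missing.
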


\subsection{Convergence to Consensus}
We first show that the states $x_i[k]$ of all regular nodes $v_i \in \mathcal{R}$ reach consensus under Algorithm~\ref{alg: resilient dec opt}.  

\begin{theorem}[Consensus]
Under Assumptions \ref{ass: gradient_bound}, \ref{ass: step-size}, and \ref{ass: robust}, $\lim_{k\rightarrow\infty}\|x_{i}[k]-x_{j}[k]\| = 0$ for all $v_i, v_j \in \mathcal{R}$.
\label{thm:consensus}
\end{theorem}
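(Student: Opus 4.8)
The plan is to show that the regular nodes' states can be modeled as a time-varying consensus iteration with vanishing perturbations, and then invoke the robustness-to-rootedness property of Lemma~\ref{lem:~root}. First I would fix a coordinate $p \in [d]$ and examine the scalar sequences $\{[x_i[k]]_p : v_i \in \mathcal{R}\}$. The key observation is that after the two filtering steps, each regular node $v_i$ computes $z_i[k]$ as an average over a set $\mathcal{S}_i^{\text{mm}}[k]$ from which at most $(2d+1)F$ incoming values have been discarded (at most $F$ in the distance filter and at most $2dF$ in the component-wise min-max filter, since each of the $d$ coordinates removes at most $F$ high and $F$ low values). Moreover, by the standard min-max filtering argument, any adversarial value that \emph{survives} the $p$-th coordinate's min-max screening lies within the interval $[\min_j [x_j[k]]_p, \max_j [x_j[k]]_p]$ spanned by the \emph{regular} states that $v_i$ retains — so in coordinate $p$, $[z_i[k]]_p$ is a convex combination of the retained regular nodes' $p$-th components (the Byzantine contributions can be rewritten as convex combinations of surviving regular values). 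Thus $[z_i[k]]_p = \sum_{v_j \in \mathcal{R}} w_{ij}[k]\,[x_j[k]]_p$ for nonnegative weights summing to one, where $w_{ii}[k] > 0$ and $w_{ij}[k]$ is bounded below by a positive constant whenever it is nonzero (since each weight is a multiple of $1/|\mathcal{S}_i^{\text{mm}}[k]| \geq 1/N$).

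Next I would assemble these weights into a row-stochastic matrix $W[k]$ on the regular nodes and write the update as $x[k+1] = W[k] x[k] - \eta[k] g[k]$ (stacking coordinates), where $\|g_i[k]\|_2 \leq L$ by Assumption~\ref{ass:~gradient_bound}. The effective interaction graph at time $k$ — the graph on $\mathcal{R}$ with an edge $(v_j, v_i)$ whenever $w_{ij}[k] > 0$ — is obtained from $\mathcal{G}$ restricted to $\mathcal{R}$ by removing at most $(2d+1)F$ incoming edges per node (the filtered-out neighbors, some of which may have been regular); by Lemma~\ref{lem:~root}, every such graph is rooted, and it has self-loops at every node. Standard results on products of row-stochastic matrices whose associated graphs are repeatedly rooted with uniformly lower-bounded positive entries give that the "consensus part" of the dynamics contracts: there exist constants $C > 0$ and $\rho \in (0,1)$ such that the product $W[k]W[k-1]\cdots W[\ell]$ drives any two rows together at rate $\rho^{(k-\ell)/M}$ for a suitable window length $M$ (here one may need a union/sequential-rootedness argument over windows of length $|\mathcal{R}|$ to pass from "rooted at each step" to "uniformly contracting," since the root node can change).

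With this contraction in hand, the final step is routine perturbation bookkeeping: letting $e[k] = \max_{v_i, v_j \in \mathcal{R}} \|x_i[k] - x_j[k]\|$, unrolling the recursion yields $e[k] \leq C\rho^{k/M} e[0] + 2L\sum_{t=0}^{k-1} C\rho^{(k-1-t)/M}\eta[t]$, and since $\eta[t] \to 0$ (Assumption~\ref{ass:~step-size}) the convolution of a summable geometric sequence with a vanishing sequence tends to zero, giving $\lim_{k\to\infty} e[k] = 0$. I expect the main obstacle to be the second step — rigorously establishing the uniform contraction of the random/adversarially-chosen products $W[k]$. The subtlety is that $W[k]$ depends on the Byzantine values (through which neighbors get filtered), so the interaction graph is adversarially time-varying; one must argue that \emph{regardless} of the adversary's choices the filtered graph stays rooted (this is exactly what Assumption~\ref{ass:~robust} via Lemma~\ref{lem:~root} buys) and then invoke a "jointly rooted over bounded windows implies contraction" lemma, being careful that the per-step root node may vary. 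Everything else — the convex-combination representation after min-max filtering and the perturbed-consensus limit — is standard.
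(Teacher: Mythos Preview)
Your proposal is correct and takes essentially the same approach as the paper: coordinate-wise reduction to a perturbed scalar consensus iteration, the $(2d+1)F$ filtering count, rootedness of the filtered interaction graph via Lemma~\ref{lem: root} and Assumption~\ref{ass: robust}, and convergence via a vanishing-perturbation argument using $\eta[k]\|g_i[k]\|_2 \to 0$. The paper's proof is terser---it defers the row-stochastic contraction and the Byzantine-rewriting details entirely to Theorem~6.1 of \cite{sundaram2018distributed}---whereas you have spelled out what that invocation entails.
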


\begin{proof}
We will argue that all regular nodes $v_i \in \mathcal{R}$ reach consensus on each component of their vectors $x_i[k]$, which will then prove the result.  For all $p \in [d]$ and for all $v_i \in \mathcal{R}$,  from \eqref{eqn: dynamic}, the $p$-th component of the vector $x_i[k]$ evolves as
\begin{equation}
[x_i[k+1]]_p = [z_i[k]]_p - \eta[k][g_i[k]]_p.
\label{eq:x_component}
\end{equation}
From \eqref{eq: z_i_calc}, the quantity $z_i[k]$ is an average of a subset of the parameter vectors from node $v_i$'s neighborhood.  In particular, the set $\mathcal{S}_i^{mm}[k]$ is obtained by removing at most $(2d+1)F$ of the vectors received from $v_i$'s neighbors ($F$ vectors removed by the distance based filtering in line 8, and up to $2F$ additional vectors removed by the minmax filtering step on each of the $d$ components in line 9 of the algorithm).  Thus, at each time-step $k$, component $[z_i[k]]_p$ is an average of at least $|\mathcal{N}_i^{-}| - (2d+1)F$ of $v_i$'s neighbors values on that component.  Since the graph is $((2d+1)F + 1)$-robust and the Byzantine agents form an $F$-local set (by Assumption~\ref{ass: robust}), and leveraging Lemma~\ref{lem: root} and the fact that the term $\eta[k]g_i[k]$ asymptotically goes to zero (by Assumptions~\ref{ass: gradient_bound} and \ref{ass: step-size}), we can use an identical argument as in Theorem 6.1 from \cite{sundaram2018distributed} to show that the scalar dynamics \eqref{eq:x_component} converge to consensus.  
\end{proof}

\subsection{What Region Do the States Converge To?}

We now analyze the trajectories of the states of the agents under Algorithm~\ref{alg: resilient dec opt}.  
We start with the following result about the quantity $z_i[k]$ calculated in line 10 of the algorithm (and given by equation~\eqref{eq: z_i_calc}). 

\begin{proposition} \label{prop: alg prop}
For all $k \in \mathbb{N}$ and $v_i \in \mathcal{R}$, if there exists $R_i[k] \in \mathbb{R}_{\ge 0}$ such that $ \| x_j[k] - \hat{x} \|_2 \leq R_i[k]$ for all $v_j \in (\mathcal{N}_i^- \cap \mathcal{R}) \cup \{ v_i \}$ then $ \| z_i[k] - \hat{x} \|_2 \leq R_i[k]$.
\end{proposition}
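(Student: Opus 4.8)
The plan is to show that \emph{every} state retained in $\mathcal{S}_i^{\text{mm}}[k]$ lies in the ball $\mathcal{B}(\hat{x}, R_i[k])$, and then to conclude from the convexity of that ball, since $z_i[k]$ in \eqref{eq: z_i_calc} is a uniform average (hence a convex combination) of the retained states. Note first that $x_i[k]$ is never discarded, because both \texttt{dist\_filter} and \texttt{minmax\_filter} act only on $\mathcal{S}_i[k] \setminus \{x_i[k]\}$ (resp. $\mathcal{S}_i^{\text{dist}}[k] \setminus \{x_i[k]\}$); thus $\mathcal{S}_i^{\text{mm}}[k]$ is nonempty and $z_i[k]$ is well defined.

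Next I would dispose of the regular states trivially: if $v_s \in \mathcal{R}$ and $x_s[k] \in \mathcal{S}_i^{\text{mm}}[k]$, then $v_s$ is either $v_i$ itself or an in-neighbor of $v_i$, so $v_s \in (\mathcal{N}_i^- \cap \mathcal{R}) \cup \{v_i\}$ and the hypothesis gives $\|x_s[k] - \hat{x}\|_2 \leq R_i[k]$. The real work is to bound the surviving Byzantine states, and this is where \texttt{dist\_filter} is used. Since the Byzantine set is $F$-local (Assumption~\ref{ass: robust}), $v_i$ has at most $F$ Byzantine in-neighbors. If \emph{none} of their states survives the distance filter, then $\mathcal{S}_i^{\text{mm}}[k] \subseteq \mathcal{S}_i^{\text{dist}}[k]$ contains only states of $v_i$ and of regular in-neighbors, and we are already done by the previous observation. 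Otherwise, let $v_j$ be a Byzantine in-neighbor with $x_j[k] \in \mathcal{S}_i^{\text{dist}}[k]$. The distance filter removes exactly the $F$ states of $\mathcal{S}_i[k] \setminus \{x_i[k]\}$ with the largest values of $D_{i\cdot}[k]$ from \eqref{def: D-metric}; since $x_j[k]$ was \emph{not} among those removed, at most $F-1$ of the $F$ removed states are Byzantine, so at least one removed state $x_m[k]$ belongs to a regular in-neighbor $v_m \in \mathcal{N}_i^- \cap \mathcal{R}$. By the definition of the filter, $D_{ij}[k] \leq D_{im}[k] = \|x_m[k] - \hat{x}\|_2 \leq R_i[k]$, i.e. $\|x_j[k] - \hat{x}\|_2 \leq R_i[k]$; the same argument applies to every surviving Byzantine state.

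Combining the two cases, every element of $\mathcal{S}_i^{\text{mm}}[k] \subseteq \mathcal{S}_i^{\text{dist}}[k]$ lies in $\mathcal{B}(\hat{x}, R_i[k])$. Since $z_i[k]$ is a uniform average of these elements and $\mathcal{B}(\hat{x}, R_i[k])$ is convex, $z_i[k] \in \mathcal{B}(\hat{x}, R_i[k])$, which is the claimed inequality. I expect the only delicate point to be the counting/tie-breaking step that produces a removed \emph{regular} state dominating $x_j[k]$ in $D$-value; once the filter's removal convention is stated precisely, the rest is routine bookkeeping about which states each of the two filters can eliminate and a one-line appeal to convexity of the Euclidean ball.
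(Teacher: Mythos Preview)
Your proposal is correct and follows essentially the same approach as the paper: both use the pigeonhole observation that if any Byzantine state survives \texttt{dist\_filter} then at least one regular in-neighbor's state was removed, so the surviving Byzantine state's distance to $\hat{x}$ is dominated by that removed regular state's distance (hence by $R_i[k]$), and then both conclude via convexity of the ball (the paper writes out the triangle inequality on the average, while you invoke convexity directly). The paper phrases the key step as the existence of a single regular $v_r$ dominating \emph{all} surviving states, whereas you bound each surviving Byzantine state separately, but this is a purely cosmetic difference.
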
  
    
\begin{proof}
Consider the step $\mathcal{S}_i^{\text{dist}}[k] \gets$ \texttt{dist\_filter} ($F, \; \hat{x}, \; \mathcal{S}_i[k]$) in Algorithm \ref{alg: resilient dec opt}. We will first prove the following claim. For each $v_i \in \mathcal{R}$, there exists $v_r \in ( \mathcal{N}_i^- \cap \mathcal{R} ) \cup \{ v_i \}$ such that $\| x_j[k] - \hat{x} \|_2 \leq \| x_r[k] - \hat{x} \|_2$ for all $v_j \in \{ v_s: x_s[k] \in \mathcal{S}_i^{\text{dist}}[k] \}$.

There are two possible cases.  First, if the set $\mathcal{S}_i^{\text{dist}}[k]$ contains only regular nodes, we can simply choose $v_r \in ( \mathcal{N}_i^- \cap \mathcal{R} ) \cup \{ v_i \}$ to be the node whose state $x_r[k]$ is furthest away from $\hat{x}$.  Next, consider the case where $\mathcal{S}_i^{\text{dist}}[k]$ contains the states of one or more Byzantine nodes. Since node $v_i \in \mathcal{R}$ removes the $F$ states from $\mathcal{N}_{i}^{-}$ that are furthest away from $\hat{x}$ (in line 8 of the algorithm), 
and there are at most $F$ Byzantine nodes in $\mathcal{N}_i^-$, there is at least one regular state removed by the node $v_i$. Let $v_r$ be one of the regular nodes whose state is removed.  We then  have $D_{ir} [k] \geq D_{ij} [k]$, for all $v_j \in \{ v_s: x_s[k] \in \mathcal{S}_i^{\text{dist}}[k] \}$ which proves the claim.  

Consider the step $\mathcal{S}_i^{\text{mm}}[k] \gets$ \texttt{minmax\_filter} ($F ,\; \mathcal{S}_i^{\text{dist}}[k]$) in Algorithm~\ref{alg: resilient dec opt}. We have that $\mathcal{S}_i^{\text{mm}}[k] \subset \mathcal{S}_i^{\text{dist}}[k]$. Then, consider the step $z_i[k] \gets$ \texttt{average} ($\mathcal{S}_i^{\text{mm}}[k]$). We have
\begin{equation*}
    z_i[k] - \hat{x} 
    = \sum_{ v_j \in \mathcal{N}_i^{\text{mm}} [k]}  \frac{1}{|\mathcal{S}_i^{\text{mm}} [k]|}   \big( x_j[k] - \hat{x} \big) .
\end{equation*}
Since $\| x_j[k] - \hat{x} \|_2 \leq \| x_r[k] - \hat{x} \|_2$ for all $v_j \in \mathcal{N}_i^{\text{mm}} [k]$ (where $v_r$ is the node identified in the claim at the start of the proof), we obtain
\begin{equation*}
    \| z_i[k] - \hat{x} \|_2 
    \leq \sum_{ v_j \in \mathcal{N}_i^{\text{mm}} [k]}  \frac{1}{|\mathcal{S}_i^{\text{mm}} [k]|}   \| x_j[k] - \hat{x} \|_2 
     \le R_i[k].
\end{equation*}
\end{proof}

Next, we will establish certain quantities that will be useful for our analysis of the convergence region.  Since the set $\argmin f_i(x)$ is  non-empty for all $v_i \in \mathcal{V}$ (by Assumption~\ref{ass: convex}), we define $x_i^* \in \mathbb{R}^d$ to be a minimizer of $f_i(x)$ for all $v_i \in \mathcal{V}$. For $\epsilon > 0$, define
\begin{align}
    \mathcal{C}_i (\epsilon) \triangleq \{ x \in \mathbb{R}^d : f_i(x) \leq f_i(x_i^*) + \epsilon \}.   \label{def: sublevel_set}
\end{align}
Since the set $\argmin f_i(x)$ is bounded for all $v_i \in \mathcal{V}$, there exists $\delta_i (\epsilon) \in (0, \infty)$ such that $\mathcal{C}_i( \epsilon ) \subseteq \mathcal{B}(x_i^*, \delta_i (\epsilon))$ for all $v_i \in \mathcal{V}$. 


\begin{proposition} \label{prop: grad}
Consider a  convex function $f$ that has bounded subgradients, and suppose the set $\argmin f(x)$ is non-empty and bounded.  Then for all $\epsilon > 0$ there exists $\theta( \epsilon ) \ge 0$ such that $\angle ( - g(x), \; x^* - x ) \leq \theta (\epsilon) < \frac{\pi}{2}$ for all $x \notin \mathcal{C}( \epsilon )$, $g(x) \in \partial f(x)$, and $x^* \in \argmin f(x)$, where $\mathcal{C}( \epsilon )$ is defined in the same way as \eqref{def: sublevel_set}.  
\end{proposition}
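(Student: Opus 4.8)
My plan is to combine the subgradient inequality (which immediately handles every point except those far from $\argmin f$) with a compactness argument that exploits the \emph{boundedness} of $\argmin f$ to control the far field. Write $S=\argmin f$, $f^{*}=\min f$, and let $L$ bound $\|g(x)\|$; recall a bounded subgradient makes $f$ globally $L$-Lipschitz. The first observation is that for any $x\notin\mathcal{C}(\epsilon)$, any $g\in\partial f(x)$ and any $x^{*}\in S$, the inequality $f(x^{*})\ge f(x)+\langle g,x^{*}-x\rangle$ rearranges to
\begin{equation*}
\langle g,\,x-x^{*}\rangle \;\ge\; f(x)-f^{*} \;>\;\epsilon,
\end{equation*}
so $\cos\angle(-g,\,x^{*}-x)=\dfrac{\langle g,x-x^{*}\rangle}{\|g\|\,\|x-x^{*}\|}\ge\dfrac{\epsilon}{L\,\|x-x^{*}\|}>0$. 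Hence the angle is \emph{always} strictly below $\pi/2$, and it can fail to be uniformly bounded away from $\pi/2$ only along a sequence with $\|x-x^{*}\|\to\infty$.

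So I would argue by contradiction: if no admissible $\theta(\epsilon)$ exists, pick $x_{n}\notin\mathcal{C}(\epsilon)$, $g_{n}\in\partial f(x_{n})$, $x_{n}^{*}\in S$ with $\angle(-g_{n},x_{n}^{*}-x_{n})\to\pi/2$. Since $\|g_{n}\|\le L$ and $S$ is bounded, after a subsequence $g_{n}\to g$ and $x_{n}^{*}\to x^{*}\in S$; by the bound above $\|x_{n}-x_{n}^{*}\|\to\infty$, and as $S$ is bounded this also forces $\|x_{n}\|\to\infty$ and makes $\tfrac{x_{n}-x_{n}^{*}}{\|x_{n}-x_{n}^{*}\|}$ and $\tfrac{x_{n}}{\|x_{n}\|}$ converge to a common limit $w$ on the unit sphere (after a further subsequence). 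Using $\big\langle g_{n},\tfrac{x_{n}-x_{n}^{*}}{\|x_{n}-x_{n}^{*}\|}\big\rangle=\|g_{n}\|\cos\angle(-g_{n},x_{n}^{*}-x_{n})$ and letting $n\to\infty$ (the right side tends to $0$ since $\|g_{n}\|\le L$), the contradiction hypothesis becomes $\langle g,w\rangle=0$, whereas I will show $\langle g,w\rangle>0$.

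The crux is that boundedness of $S=\{f\le f^{*}\}$ forces every level set of $f$ to be bounded, hence $f$ is coercive and, more precisely, has a positive linear growth rate in every direction: the recession function $f_{\infty}$ is finite, continuous, and strictly positive on the unit sphere, and $t\mapsto\tfrac{f(x_{0}^{*}+tv)-f^{*}}{t}$ is nondecreasing with supremum $f_{\infty}(v)$ (standard facts for convex functions, e.g.\ Rockafellar). Fixing one minimizer $x_{0}^{*}\in S$ and applying the subgradient inequality at $x_{n}$ with test point $x_{0}^{*}$,
\begin{equation*}
\big\langle g_{n},\,x_{n}-x_{0}^{*}\big\rangle \;\ge\; f(x_{n})-f^{*} \;=\; \|x_{n}-x_{0}^{*}\|\cdot\frac{f(x_{0}^{*}+t_{n}v_{n})-f^{*}}{t_{n}},
\end{equation*}
where $t_{n}=\|x_{n}-x_{0}^{*}\|\to\infty$ and $v_{n}=\tfrac{x_{n}-x_{0}^{*}}{\|x_{n}-x_{0}^{*}\|}\to w$; dividing by $t_{n}$, using monotonicity in $t$ and continuity in $v$ of the difference quotient, and letting $n\to\infty$ gives $\langle g,w\rangle\ge f_{\infty}(w)\ge\min_{\|v\|=1}f_{\infty}(v)=:c_{0}>0$, contradicting $\langle g,w\rangle=0$. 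Running these two estimates quantitatively instead — $\cos\angle\ge\tfrac{\epsilon}{L\|x-x^{*}\|}$ on the range $\|x-x^{*}\|\le T'$, and $\cos\angle\ge\tfrac{1}{L}\cdot\tfrac{f(x)-f^{*}}{\|x-x^{*}\|}\ge\tfrac{c_{0}}{2L}$ once $\|x-x^{*}\|\ge T'$ for $T'$ large enough — one can alternatively extract an explicit $\theta(\epsilon)$ and skip the contradiction.

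I expect the main obstacle to be exactly this far-field estimate: a single subgradient at $x$ only constrains $x^{*}-x$ to a half-space and cannot by itself bound the angle away from $\pi/2$ as $\|x\|\to\infty$. The one genuinely nontrivial ingredient is that a convex function with a \emph{bounded} set of minimizers must grow at least linearly, which is what couples the subgradient bound $L$ to a strictly positive lower bound on the cosine; the near-field part and the reduction to the far field are routine.
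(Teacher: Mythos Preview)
Your proof is correct, but it takes a different route from the paper's. Both arguments start from the subgradient inequality to obtain $\cos\angle(-g,x^*-x)\ge \dfrac{f(x)-f^*}{L\|x-x^*\|}$, and both reduce the problem to showing that the slope $\dfrac{f(x)-f^*}{\|x-x^*\|}$ is uniformly bounded below on $\mathbb{R}^d\setminus\mathcal{C}(\epsilon)$. The paper does this directly: it picks the point $\tilde{x}$ on the segment $[x^*,x]$ with $f(\tilde{x})=f^*+\epsilon$, uses the monotonicity of $t\mapsto\frac{f(x^*+tv)-f^*}{t}$ once (at the finite comparison $t=1$ versus $t=\|x-x^*\|/\|\tilde{x}-x^*\|$), and combines it with $\|\tilde{x}-x^*\|\le\delta(\epsilon)$ to obtain the explicit bound $\cos\angle\ge \epsilon/(L\delta(\epsilon))$, hence $\theta(\epsilon)=\arccos\!\big(\epsilon/(L\delta(\epsilon))\big)$. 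Your argument instead passes to the recession function $f_\infty$ and a compactness/contradiction, exploiting the same monotonicity but in the limit $t\to\infty$; your quantitative sketch at the end is essentially a two-region version of the paper's single estimate. The paper's approach is shorter and yields an explicit $\theta(\epsilon)$ tied to the sublevel-set radius $\delta(\epsilon)$, which is what the downstream definition of $R^*$ in \eqref{def: R^*} actually uses; your approach is equally valid for the proposition as stated but would produce a different (and less explicit) constant.
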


\begin{proof}
From the definition of convex functions, for any $x$, $y\in \mathbb{R}^d$, we have $f(y) \geq f(x) + \langle g(x), \; y - x \rangle$, where $g(x) \in \partial f(x)$. Substitute a minimizer $x^*$ of the function $f$ into the variable $y$ to get
\begin{align}
    - \langle g(x), \; x^* - x \rangle \geq f(x) - f(x^*).  \label{eqn: convex ineq}
\end{align}
Let $\hat{\theta}(x) \triangleq \angle ( g(x), x - x^* )$.
The inequality \eqref{eqn: convex ineq} becomes
\begin{align*}
    \| g(x) \|_2 \; \| x^* - x \|_2 \cos \hat{\theta}(x) &\geq f(x) - f(x^*).
\end{align*}
Fix $\epsilon > 0$, and  suppose that $x \notin \mathcal{C}(\epsilon)$. Applying $\| g(x) \|_2 \leq L$, we have
\begin{align}
    \cos \hat{\theta}(x) \geq \frac{f(x) - f(x^*)}{\| g(x) \|_2 \; \| x^* - x \|_2} \geq \frac{f(x) - f(x^*)}{L \; \| x^* - x \|_2}.  \label{eqn: cos theta}
\end{align}
Let $\Tilde{x}$ be the point on the line connecting $x^*$ and $x$ such that $f( \Tilde{x} ) = f( x^* ) + \epsilon$. We can rewrite the point $x$ as
\begin{align*}
    x = x^* + t ( \Tilde{x} - x^* )  \quad \text{where} \quad t = \frac{ \| x - x^* \|_2 }{ \| \Tilde{x} - x^* \|_2 } \geq 1.
\end{align*}
Suppose $\mathcal{C}( \epsilon ) \subseteq \mathcal{B}(x^*, \delta (\epsilon))$. Consider the term on the RHS of \eqref{eqn: cos theta}. We have
\begin{align}
    \frac{ f(x) - f(x^*) }{ \| x - x^* \|_2 } &= \frac{f( x^* + t ( \Tilde{x}- x^*) ) - f(x^*) }{ t \| \Tilde{x} - x^* \|_2}  \nonumber \\
    &\geq  \frac{f( x^* + t ( \Tilde{x}- x^*) ) - f(x^*) }{ t \max_{x \in \mathcal{C}(\epsilon)} \| x - x^* \|_2} \nonumber \\
    &\geq  \frac{f( x^* + t ( \Tilde{x}- x^*) ) - f(x^*) }{ t \delta (\epsilon)}.  \label{eqn: t increase}
\end{align}
Since the quantity $\frac{f( x^* + t ( \Tilde{x}- x^*) ) - f(x^*) }{ t }$
is non-decreasing in $t \in (0, \infty)$ \cite[Lemma~2.80]{mordukhovich2013easy}, the inequality \eqref{eqn: t increase} becomes
\begin{align}
    \frac{ f(x) - f(x^*) }{ \| x - x^* \|_2 } \geq \frac{ f(\Tilde{x}) - f(x^*)}{\delta (\epsilon)}
    = \frac{\epsilon}{ \delta(\epsilon)}.  \label{eqn: lb_grad}
\end{align}
Therefore, combining \eqref{eqn: cos theta} and \eqref{eqn: lb_grad}, we obtain
\begin{align}
    \cos \hat{\theta}(x) \geq  \frac{\epsilon}{L \delta(\epsilon)}.  \label{eqn: cos theta hat}
\end{align}
However, from the definition of convex functions, we have 
\begin{align*}
    f(x^*) \geq f( \Tilde{x} )+ \langle g( \Tilde{x} ), \; x^* - \Tilde{x} \rangle.
\end{align*}
Since $\|  g( \Tilde{x} ) \|_2 \leq L$ and $\| x^* - \Tilde{x} \|_2 \leq \delta(\epsilon)$, we get 
\begin{align*}
    \epsilon =  f( \Tilde{x} ) - f(x^*) \leq - \langle g( \Tilde{x} ), \; x^* - \Tilde{x} \rangle \leq L \delta(\epsilon).
\end{align*}
Since $\epsilon > 0$ and $\frac{\epsilon}{L \delta(\epsilon)} \leq 1$ from the above inequality, the inequality \eqref{eqn: cos theta hat} becomes
\begin{align*}
    \hat{\theta}(x) \leq \arccos \Big( \frac{\epsilon}{L \delta(\epsilon)} \Big) \triangleq \theta(\epsilon) < \frac{\pi}{2}.
\end{align*}
\end{proof}

From Proposition \ref{prop: grad}, if   $f_i$ satisfies Assumptions~\ref{ass: convex} and \ref{ass: gradient_bound} for all $v_i \in \mathcal{R}$, then for all $\epsilon > 0$, we have $\angle ( - g_i(x), \; x_i^* - x ) \leq \theta_i (\epsilon) < \frac{\pi}{2}$ for all $x \notin \mathcal{C}_i( \epsilon )$ and $v_i \in \mathcal{R}$.

Define $\Tilde{R}_i \triangleq \| x_i^* - \hat{x} \|_2$ and
\begin{align}
    R^* \triangleq \inf_{\epsilon > 0} \Big\{ \max_{v_i \in \mathcal{R}} \big\{ \max \{ \Tilde{R}_i \sec \theta_i(\epsilon) , \; \Tilde{R}_i + \delta_i(\epsilon) \} \big\} \Big\}.  \label{def: R^*}
\end{align}
We now come to the main result of this paper, showing that the states of all the regular nodes will asymptotically converge to a ball of radius $R^*$ around the auxiliary point $\hat{x}$ under Algorithm~\ref{alg: resilient dec opt}. 

\begin{theorem}[Convergence] \label{thm: main}
If the states are updated using Algorithm~\ref{alg: resilient dec opt}, and Assumptions~\ref{ass: convex}, \ref{ass: gradient_bound}, \ref{ass: step-size} and \ref{ass: robust} hold, then for all $v_i \in \mathcal{R}$, $\limsup_k \| x_i[k] - \hat{x} \|_2 \leq R^*$, regardless of the actions of any $F$-local set of Byzantine adversaries.
\end{theorem}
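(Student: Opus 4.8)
The plan is to control the single scalar $M[k] \triangleq \max_{v_i \in \mathcal{R}} \| x_i[k] - \hat{x} \|_2$ and show $\limsup_{k} M[k] \le R^*$; since $\| x_i[k] - \hat{x} \|_2 \le M[k]$ for every $v_i \in \mathcal{R}$, this is exactly the theorem, and the Byzantine agents enter only through the sets $\mathcal{S}_i[k]$, so once the neighbourhood average is controlled their influence is fully absorbed. Writing $\rho(\epsilon) \triangleq \max_{v_i \in \mathcal{R}} \max\{ \Tilde{R}_i \sec \theta_i(\epsilon),\, \Tilde{R}_i + \delta_i(\epsilon) \}$ so that $R^* = \inf_{\epsilon > 0} \rho(\epsilon)$, it suffices to fix arbitrary $\epsilon > 0$ and $\gamma > 0$, set $\rho' \triangleq \rho(\epsilon) + \gamma$, and prove $\limsup_k M[k] \le \rho'$; letting $\gamma \downarrow 0$ and then taking the infimum over $\epsilon$ finishes the argument.

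The core is a one-step bound on $\| x_i[k+1] - \hat{x} \|_2$ for each $v_i \in \mathcal{R}$. Applying Proposition~\ref{prop: alg prop} with $R_i[k] = M[k]$ (valid since every regular in-neighbour state and $x_i[k]$ lies within $M[k]$ of $\hat{x}$) gives $r_i[k] \triangleq \| z_i[k] - \hat{x} \|_2 \le M[k]$. Now split on $r_i[k]$. If $r_i[k] \le \rho'$, then by Assumption~\ref{ass: gradient_bound} the gradient step \eqref{eqn: dynamic} displaces $z_i[k]$ by at most $\eta[k] L$, so $\| x_i[k+1] - \hat{x} \|_2 \le \rho' + \eta[k] L$. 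If $r_i[k] > \rho'$, then since $\rho' > \delta_i(\epsilon) + \Tilde{R}_i$ and $\mathcal{C}_i(\epsilon) \subseteq \mathcal{B}(x_i^*, \delta_i(\epsilon))$ we have $z_i[k] \notin \mathcal{C}_i(\epsilon)$, so Proposition~\ref{prop: grad} gives $\angle(-g_i[k],\, x_i^* - z_i[k]) \le \theta_i(\epsilon)$; moreover $z_i[k]$ is not a minimizer of $f_i$, so $\| g_i[k] \|_2 > 0$, and convexity together with $\| z_i[k] - x_i^* \|_2 \le r_i[k] + \Tilde{R}_i < 2 r_i[k]$ yields $\| g_i[k] \|_2 \ge \big(f_i(z_i[k]) - f_i(x_i^*)\big)/\| z_i[k] - x_i^* \|_2 > \epsilon/(2 r_i[k])$. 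Since $x_i^* \in \mathcal{B}(\hat{x}, \Tilde{R}_i)$ and $z_i[k]$ is at distance $r_i[k] > \Tilde{R}_i$ from $\hat{x}$, the direction $\hat{x} - z_i[k]$ makes an angle at most $\arcsin(\Tilde{R}_i/r_i[k])$ with $x_i^* - z_i[k]$, so by the triangle inequality for angles $\angle(-g_i[k],\, \hat{x} - z_i[k]) \le \psi_i[k] \triangleq \theta_i(\epsilon) + \arcsin(\Tilde{R}_i/r_i[k])$; and because $r_i[k] > \rho' \ge \Tilde{R}_i \sec \theta_i(\epsilon) + \gamma$, this $\psi_i[k]$ is bounded away from $\tfrac{\pi}{2}$ uniformly in $k$, hence (taking the worst regular node) $\cos \psi_i[k] \ge c$ for a constant $c = c(\epsilon,\gamma) > 0$. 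Expanding $\| x_i[k+1] - \hat{x} \|_2^2 = \| (z_i[k] - \hat{x}) - \eta[k] g_i[k] \|_2^2$, using this angle bound, and restricting to $k$ large enough that $\eta[k] L \le c \rho'$, gives
\begin{equation*}
\| x_i[k+1] - \hat{x} \|_2^2 \le r_i[k]^2 - c\,\eta[k]\,\| g_i[k] \|_2\, r_i[k] < M[k]^2 - \tfrac{c\epsilon}{2}\,\eta[k].
\end{equation*}

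Taking the maximum of the two cases over $v_i \in \mathcal{R}$ yields, for all sufficiently large $k$,
\begin{equation*}
M[k+1]^2 \le \max\Big\{ (\rho' + \eta[k] L)^2,\ M[k]^2 - \tfrac{c\epsilon}{2}\,\eta[k] \Big\}.
\end{equation*}
By Assumption~\ref{ass: step-size}, $\eta[k] \to 0$ so $(\rho' + \eta[k] L)^2 \to \rho'^2$, while $\sum_k \eta[k] = \infty$; a routine argument from this recursion then shows $\limsup_k M[k]^2 \le \rho'^2$ (if $M[k]^2$ ever remained above $\rho'^2 + \beta$ for some $\beta > 0$ it would have to decrease by $\tfrac{c\epsilon}{2}\eta[k]$ at every subsequent step, driving $M[k]^2$ to $-\infty$; once it drops to $\rho'^2 + \beta$ it stays there for $k$ large). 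Letting $\gamma \downarrow 0$ gives $\limsup_k M[k] \le \rho(\epsilon)$, and taking the infimum over $\epsilon > 0$ gives $\limsup_k M[k] \le R^*$, which is the claim.

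The step I expect to be the main obstacle is the second case of the one-step bound. A priori, the gradient update can push $x_i[k+1]$ slightly \emph{farther} from $\hat{x}$ than $z_i[k]$ was, by an amount of order $\eta[k]^2$, precisely when $z_i[k]$ sits near the critical radius $\Tilde{R}_i \sec \theta_i(\epsilon)$, where the inward angle $\psi_i[k]$ tends to $\tfrac{\pi}{2}$ and $\cos \psi_i[k] \to 0$. This is exactly why one must work with the inflated radius $\rho' = \rho(\epsilon) + \gamma$ rather than $\rho(\epsilon)$ itself: the slack $\gamma$ is what produces the uniform bound $\cos \psi_i[k] \ge c > 0$, converting the first-order term in the expansion of $\| x_i[k+1] - \hat{x} \|_2^2$ into a genuine per-step decrease of order $\eta[k]$ that dominates the $\eta[k]^2$ overshoot. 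Carefully bookkeeping the various ``for all large $k$'' thresholds (depending on $\epsilon$, $\gamma$, $L$, $c$) and the lower bound on $\| g_i[k] \|_2$ away from $\mathcal{C}_i(\epsilon)$ is the remaining place where care is needed.
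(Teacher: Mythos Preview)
Your argument is correct and rests on the same geometric core as the paper's proof: Proposition~\ref{prop: alg prop} to bound $\|z_i[k]-\hat{x}\|$ by $M[k]$, Proposition~\ref{prop: grad} for the angle between $-g_i[k]$ and $x_i^*-z_i[k]$, the $\arcsin(\Tilde{R}_i/r_i[k])$ bound on $\angle(x_i^*-z_i[k],\hat{x}-z_i[k])$, the triangle inequality for angles, and a slack parameter ($\gamma$ here, $\xi$ in the paper) to keep $\cos\psi_i[k]$ bounded away from zero. The packaging differs: the paper partitions into three regions and proves an exact invariance---once all regular states lie in $\mathcal{B}(\hat{x},s^*(\xi))$ they never leave---by treating the intermediate shell $(\max_j\{\Tilde{R}_j+\delta_j\},\,s^*(\xi)]$ separately via a convexity-in-$p$ argument on the function $\Gamma_i(p,l)$. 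You instead use only two cases, accept the cruder overshoot bound $\rho'+\eta[k]L$ on the inner region, and close with the scalar recursion $M[k+1]^2\le\max\{(\rho'+\eta[k]L)^2,\,M[k]^2-\tfrac{c\epsilon}{2}\eta[k]\}$; this trades the paper's clean invariance for a shorter argument that avoids the Region~2 analysis entirely. Your gradient lower bound $\|g_i[k]\|_2>\epsilon/(2r_i[k])$ is weaker than the paper's constant bound $\epsilon/\delta_i(\epsilon)$, but since you only need the product $\|g_i[k]\|_2\,r_i[k]>\epsilon/2$, it suffices. Both routes are sound; yours is somewhat more economical, the paper's yields the slightly stronger intermediate statement that the convergence region is forward-invariant once the step size is small.
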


The full proof of the theorem is somewhat long, and is thus provided in the Appendix. We present here a sketch of the proof. We work towards the proof of Theorem \ref{thm: main} in several steps.  
For a fixed $\epsilon > 0$ and for all $\xi \in \mathbb{R}_{> 0}$, define the \textit{convergence radius}
\begin{align*}
    s^*( \xi, \epsilon ) \triangleq \max_{v_i \in \mathcal{R}} \big\{ \max \{ \Tilde{R}_i \sec \theta_i(\epsilon), \Tilde{R}_i + \delta_i(\epsilon) \} \big\} + \xi.
\end{align*}
Throughout the paper, we will omit the dependence of $\epsilon$ in $s^*( \xi, \epsilon )$ for notational simplicity.

The key idea is that for a fixed $\xi > 0$, we partition the space $\mathbb{R}^d$ into $3$ regions: 
\begin{enumerate}[1.]
    \item $\mathcal{B} (\hat{x}, \; \max_{v_i \in \mathcal{R}} \{ \Tilde{R}_i + \delta_i \} )$,   \label{rgn: 1}
    \item $\mathcal{B} (\hat{x}, \; s^*(\xi) ) \setminus \mathcal{B} (\hat{x}, \; \max_{v_i \in \mathcal{R}} \{ \Tilde{R}_i + \delta_i \} )$, and   \label{rgn: 2}
    \item $\mathbb{R}^d \setminus \mathcal{B} (\hat{x}, \; s^*(\xi) )$.  \label{rgn: 3}
\end{enumerate}

We refer to $\mathcal{B} (\hat{x}, \; s^*(\xi) )$ as the \textit{convergence region} and note that $R^* = \inf_{ \epsilon > 0 } s^*(0)$. In Steps 1, 2 and 3, we analyze the gradient update \eqref{eqn: dynamic} for each regular agent $v_i \in \mathcal{R}$, and in Step 4, we consider the sequence of distance updates of the regular agent furthest from the auxiliary point. Finally, we will take $\xi$ to go to zero and take the infimum of the distance bound over $\epsilon > 0$.

\textbf{Step 1:} First, we show that if $k$ is sufficiently large and the state $z_i[k]$ is in Region 1 i.e., 
\begin{align*}
    z_i[k] \in \mathcal{B} (\hat{x}, \max_{v_j \in \mathcal{R}} \{ \Tilde{R}_j + \delta_j \} ) \subset \mathcal{B} (\hat{x}, s^*( \xi ) ),
\end{align*}
then by applying the gradient update \eqref{eqn: dynamic}, the state $x_i[k+1]$ is still in the convergence region i.e., $x_i[k+1] \in \mathcal{B} (\hat{x}, s^*( \xi ) )$. To do this, we leverage the fact that the magnitude of the gradient step satisfies $\eta[k] \| g_i[k] \|_2 \leq \eta[k] L$ by Assumption \ref{ass: gradient_bound} and $\eta[k] L$ decreases as $k$ increases by Assumption \ref{ass: step-size}.

\textbf{Step 2:} We find the relationship between the terms $\| x_i[k+1] - \hat{x} \|_2$ and $\| z_i[k] - \hat{x} \|_2$ which will be used in the subsequent step. Specifically, for $v_i \in \mathcal{R}$, if the state $z_i[k]$ is in region 2 or 3, i.e., $\| z_i[k] - \hat{x} \|_2 > \max_{v_j \in \mathcal{R}} \{ \Tilde{R}_j + \delta_j \}$, then we show 
\begin{multline}
    \| x_i[k+1] - \hat{x} \|_2^2 \leq \| z_i[k] - \hat{x} \|_2^2  \\
    - \Delta_i( \| z_i[k] - \hat{x} \|_2, \; \eta[k] \; \| g_i[k] \|_2 )   \label{eqn: z-x general update}
\end{multline} 
where we define $\Delta_i: [ \Tilde{R}_i, \infty) \times \mathbb{R}_+ \to \mathbb{R}$ to be the function
\begin{align*}
    \Delta_i( p, l ) \triangleq 2 l  \Big( \sqrt{ p^2 - \Tilde{R}_i^2 } \cos \theta_i - \Tilde{R}_i \sin \theta_i \Big) - l^2.  
\end{align*}

\textbf{Step 3:} We show that if $k$ is sufficiently large and the state $z_i[k]$ is in Region 2 i.e., $\| z_i[k] - \hat{x} \|_2 \in \big( \max_{v_j \in \mathcal{R}} \{ \Tilde{R}_j + \delta_j \}, s^*(\xi) \big]$ then by applying the gradient update \eqref{eqn: dynamic}, we have that the state $x_i[k+1]$ is still in the convergence region i.e., $x_i[k+1] \in \mathcal{B} (\hat{x}, s^*( \xi ) )$. This is done by showing that small $\eta[k] L$ makes the RHS of \eqref{eqn: z-x general update} bounded above by $\big( s^*(\xi) \big)^2$.

From Step 1 and 3, and Proposition~\ref{prop: alg prop}, we can conclude that for sufficiently large $k$, for each regular node $v_i \in \mathcal{R}$, if the state $x_i[k]$ is in the convergence region, then the state $x_i[k+1]$ is still in the convergence region, i.e., the regular states cannot leave the convergence region.

\textbf{Step 4:} We show that the states of all regular nodes eventually enter the convergence region. Specifically, from Proposition~\ref{prop: alg prop}, we have that 
\begin{align}
    \max_{v_i \in \mathcal{R}} \| z_i[k] - \hat{x} \|_2 \leq \max_{v_i \in \mathcal{R}} \| x_i[k] - \hat{x} \|_2.  \label{eqn: max ineq}
\end{align}
On the other hand, for the agents whose state $z_i[k]$ is in Region 3, we show that the term $\Delta_i( \| z_i[k] - \hat{x} \|_2, \; \eta[k] \; \| g_i[k] \|_2 )$ in \eqref{eqn: z-x general update} is bounded below  as 
\begin{align}
    \Delta_i( \| z_i[k] - \hat{x} \|_2, \eta[k] \; \| g_i[k] \|_2 ) > c_i \eta[k]  \label{eqn: grad_chain}
\end{align}
for sufficiently large $k$,
where $c_i$ is a positive constant. Incorporating \eqref{eqn: max ineq} and \eqref{eqn: grad_chain} into \eqref{eqn: z-x general update}, we can conclude that the quantity $\max_{v_i \in \mathcal{R}} \| x_i[k] - \hat{x} \|_2^2$ strictly decreases if it is greater than $s^*(\xi)$. Eventually, every state will  enter the convergence region, which completes the proof.

To gain insight into the convergence region, we provide the following result.

\begin{proposition} \label{prop: true_sol}
Let $x^*$ be a solution of Problem~\eqref{prob: regular node}. If Assumptions~\ref{ass: convex} and \ref{ass: gradient_bound} hold, then $x^* \in \mathcal{B} (\hat{x}, R^*)$ where $R^*$ is defined in \eqref{def: R^*}.
\end{proposition}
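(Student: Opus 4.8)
The plan is to localize the problem to a fixed $\epsilon>0$. I will show that $x^*\in\mathcal{B}\big(\hat x,\,s^*(0,\epsilon)\big)$, where $s^*(0,\epsilon)=\max_{v_i\in\mathcal{R}}\{\max\{\tilde R_i\sec\theta_i(\epsilon),\,\tilde R_i+\delta_i(\epsilon)\}\}$, and then let $\epsilon\downarrow 0$, since $R^*=\inf_{\epsilon>0}s^*(0,\epsilon)$ by \eqref{def: R^*}. So fix $\epsilon>0$, set $r\triangleq\|x^*-\hat x\|_2$, and argue by contradiction, assuming $r>s^*(0,\epsilon)$. Note $r>0$ since $s^*(0,\epsilon)\ge\delta_i(\epsilon)>0$.

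The first step is a ``far from every sublevel set'' observation: because $\mathcal{C}_i(\epsilon)\subseteq\mathcal{B}(x_i^*,\delta_i(\epsilon))$ and $\|x_i^*-\hat x\|_2=\tilde R_i$, the triangle inequality gives $\mathcal{C}_i(\epsilon)\subseteq\mathcal{B}(\hat x,\,\tilde R_i+\delta_i(\epsilon))$; since $r>\tilde R_i+\delta_i(\epsilon)$ for every $v_i\in\mathcal{R}$, we conclude $x^*\notin\mathcal{C}_i(\epsilon)$ for all $v_i\in\mathcal{R}$. Proposition~\ref{prop: grad} (applicable because each $f_i$ is convex with bounded subgradients and a nonempty bounded minimizer set, by Assumptions~\ref{ass: convex} and~\ref{ass: gradient_bound}) then yields, for every $g_i\in\partial f_i(x^*)$, the bound $\angle\big(-g_i,\,x_i^*-x^*\big)\le\theta_i(\epsilon)<\tfrac{\pi}{2}$; in particular $g_i\neq 0$ and $x^*\neq x_i^*$.

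The geometric core is to show $\langle g_i,\,x^*-\hat x\rangle>0$ for every $v_i\in\mathcal{R}$ and every $g_i\in\partial f_i(x^*)$. Consider the triangle with vertices $x^*,\hat x,x_i^*$, and let $\phi_i\triangleq\angle(x_i^*-x^*,\,\hat x-x^*)$. By the law of sines the side opposite $\phi_i$ has length $\tilde R_i$, so $\sin\phi_i\le\tilde R_i/r$; moreover $\phi_i$ is not obtuse since $\tilde R_i<r$ (as $r>\tilde R_i\sec\theta_i(\epsilon)\ge\tilde R_i$), hence $\phi_i\le\arcsin(\tilde R_i/r)$. From $r>\tilde R_i\sec\theta_i(\epsilon)$ we get $\tilde R_i/r<\cos\theta_i(\epsilon)$, so $\phi_i<\arcsin(\cos\theta_i(\epsilon))=\tfrac{\pi}{2}-\theta_i(\epsilon)$. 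Using the triangle inequality for angular distance on the unit sphere, $\angle(-g_i,\,\hat x-x^*)\le\angle(-g_i,\,x_i^*-x^*)+\phi_i<\theta_i(\epsilon)+\big(\tfrac{\pi}{2}-\theta_i(\epsilon)\big)=\tfrac{\pi}{2}$, whence $\langle-g_i,\,\hat x-x^*\rangle>0$, i.e.\ $\langle g_i,\,x^*-\hat x\rangle>0$. (The degenerate configurations $\tilde R_i=0$ and $x^*$ collinear with $\hat x,x_i^*$ are dispatched directly, again using $r>\tilde R_i$.)

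Finally I invoke first-order optimality: since each $f_i$ is finite convex on $\mathbb{R}^d$, the Moreau--Rockafellar sum rule gives $\partial\big(\tfrac{1}{|\mathcal{R}|}\sum_{v_i\in\mathcal{R}}f_i\big)(x^*)=\tfrac{1}{|\mathcal{R}|}\sum_{v_i\in\mathcal{R}}\partial f_i(x^*)$, and optimality of $x^*$ for Problem~\eqref{prob: regular node} means $0$ lies in this set; so there exist $g_i\in\partial f_i(x^*)$ with $\sum_{v_i\in\mathcal{R}}g_i=0$. Then $0=\big\langle\sum_{v_i\in\mathcal{R}}g_i,\,x^*-\hat x\big\rangle=\sum_{v_i\in\mathcal{R}}\langle g_i,\,x^*-\hat x\rangle>0$, a contradiction. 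Hence $r\le s^*(0,\epsilon)$ for every $\epsilon>0$, and taking the infimum gives $\|x^*-\hat x\|_2\le R^*$. I expect the main obstacle to be the geometric estimate of the third paragraph — making the bound $\phi_i\le\arcsin(\tilde R_i/r)$ and the angular triangle inequality rigorous, and covering the degenerate cases; the remaining steps are short triangle-inequality arguments and the optimality condition.
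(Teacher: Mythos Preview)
Your proposal is correct and follows essentially the same route as the paper: show that at any point outside $\mathcal{B}(\hat x,s^*(0,\epsilon))$ every $g_i\in\partial f_i$ makes an acute angle with $x^*-\hat x$, whence the sum of subgradients cannot vanish. The only cosmetic differences are that the paper packages the bound $\phi_i\le\arcsin(\tilde R_i/r)$ into a separate geometric lemma (Lemma~\ref{lem: max_angle}) rather than invoking the law of sines, and cites an external reference for the angular triangle inequality, whereas you appeal to Moreau--Rockafellar explicitly for the sum rule; the logical skeleton is identical.
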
 

\begin{proof}
We will show that the summation of any subgradients of the regular nodes' functions at any point outside the region $\mathcal{B} (\hat{x}, R^*)$ cannot be zero. 

Let $x_0$ be a point outside $\mathcal{B} (\hat{x}, R^*)$. Since $\| x_0 - \hat{x} \|_2 > \max_{v_i \in \mathcal{R}} \{ \Tilde{R}_i + \delta_i(\epsilon) \}$ for some $\epsilon > 0$, we have that $x_0 \notin \mathcal{C}_i(\epsilon)$ for all $v_i \in \mathcal{R}$. By the definition of $\mathcal{C}_i(\epsilon)$ in \eqref{def: sublevel_set}, we have 
$f_i(x_0) > f_i(x_i^*) + \epsilon$ for all $v_i \in \mathcal{R}$. Since the functions $f_i$ are convex, we obtain $g_i(x_0) \neq \mathbf{0}$ for all $v_i \in \mathcal{R}$ where $g_i(x_0) \in \partial f_i(x_0)$.

Consider the angle between the vectors $x_0 - x_i^*$ and $x_0 - \hat{x}$. Suppose $\Tilde{R}_i > 0$; otherwise, we have $\angle (x_0 - x_i^*, \; x_0 - \hat{x}) = 0$. Using Lemma~\ref{lem: max_angle} in the Appendix, we can bound the angle as follows: 
\begin{align*}
    \angle (x_0 - x_i^*, \; x_0 - \hat{x}) 
    &\leq \max_{y \in \mathcal{B}(\hat{x}, \Tilde{R}_i)}  \angle (x_0 - y, \; x_0 - \hat{x})  \\
    &= \arcsin \Big( \frac{ \Tilde{R}_i }{ \| x_0 - \hat{x} \|_2} \Big). 
\end{align*}
Since $\| x_0 - \hat{x} \|_2 > \max_{v_i \in \mathcal{R}} \{ \Tilde{R}_i \sec \theta_i  \}$ and $\arcsin(x)$ is an increasing function in $x \in [-1, 1]$, we have
\begin{align*}
    \angle (x_0 - x_i^*, \; x_0 - \hat{x}) 
    &< \arcsin \Big( \frac{ \Tilde{R}_i }{ \Tilde{R}_i \sec \theta_i } \Big) \\
    &=\arcsin ( \cos \theta_i )  \\
    &= \frac{\pi}{2} - \theta_i.
\end{align*}
Using Proposition \ref{prop: grad} and the inequality above, we can bound the angle between the vectors $ g_i(x_0)$ and $x_0 - \hat{x}$ as follows:
\begin{align*}
    & \quad \angle ( g_i(x_0), \; x_0 - \hat{x}) \\
    &\leq \angle ( g_i(x_0), \; x_0 - x_i^*) + \angle (x_0 - x_i^*, \; x_0 - \hat{x})  \\
    &< \theta_i + \Big( \frac{\pi}{2} - \theta_i \Big) = \frac{\pi}{2}.
\end{align*}
Note that the first inequality is obtained from \cite[Corollary~12]{castano2016angles}. Let $u = \frac{x_0 - \hat{x}}{ \| x_0 - \hat{x} \|_2 }$. Compute the inner product
\begin{align*}
    \Big\langle \sum_{v_i \in \mathcal{R}} g_i(x_0), \; u \Big\rangle 
    &= \sum_{v_i \in \mathcal{R}} \langle g_i(x_0), \; u \rangle \\
    &= \sum_{v_i \in \mathcal{R}} \| g_i(x_0) \|_2 \cos \angle ( g_i(x_0), \; x_0 - \hat{x}) \\
    &> 0
\end{align*}
since $\| g_i(x_0) \|_2 > 0$ and $\cos \angle ( g_i(x_0), \; x_0 - \hat{x}) > 0$ for any $v_i \in \mathcal{R}$. This implies that $\sum_{v_i \in \mathcal{R}} g_i(x_0) \neq \mathbf{0}$. Since we can arbitrarily choose $g_i(x_0)$ from the set $\partial f_i(x_0)$, we have $\mathbf{0} \notin \partial f(x_0)$ where $f(x) = \frac{1}{| \mathcal{R} |} \sum_{v_i \in \mathcal{R}} f_i(x)$. 
\end{proof}


Thus, Theorem~\ref{thm: main} and Proposition~\ref{prop: true_sol} show that Algorithm~\ref{alg: resilient dec opt} causes all regular nodes to converge to a region that also contains the true solution, regardless of the actions of any $F$-local set of Byzantine adversaries.  The size of this region scales with the quantity $R^*$. Loosely speaking, this quantity becomes smaller as the minimizers of the local functions of the regular agents get closer together.  More specifically, consider a fixed $\epsilon > 0$.  If the functions $f_i(x)$ are translated so that the minimizers $x_i^*$ get closer together (i.e., $\Tilde{R}_i$ is smaller and $\theta(\epsilon)$ is fixed), and the auxiliary point $\hat{x}$ is in the hyperrectangle containing the minimizers (which is the case when we run a resilient consensus algorithm such as the one in \cite{leblanc2013resilient}) then $R^*$ also decreases, and the state $x_i[k]$ is guaranteed to become closer to the true minimizer as $k$ goes to infinity.

\section{Numerical Experiment}

\begin{figure}[h]
\centering
{\includegraphics[width=0.45\textwidth]{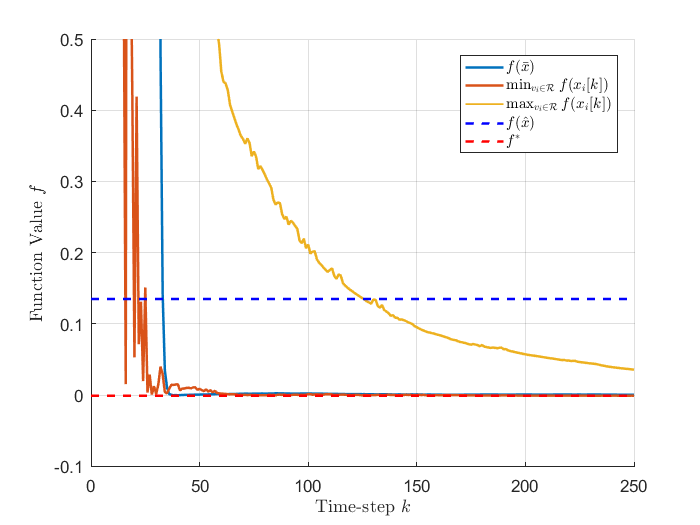}}
\caption{Function value $f(x)$ evaluated at the average of regular agents' states (blue solid line), at the regular agent's state that gives the lowest and highest value (orange and yellow solid line respectively), at the auxiliary point (blue dashed line), and at the minimizer (red dashed line)}
\label{fig: numerical experiment}
\end{figure}

In our numerical experiment, we set the total number of agents to be $n = 100$ and the number of dimensions to be $d = 3$. We construct a $15$-robust network in order to tolerate up to $2$  Byzantine agents in the neighborhood of any regular nodes i.e., $F=2$. Each regular node possesses a quadratic function $f_i(x) = \frac{1}{2} x^T Q_i x + b_i^T x$ where $Q_i$ and $b_i$ are randomly chosen with $Q_i$ guaranteed to be positive definite.\footnote{To satisfy Assumption 2 (bounded gradients), we saturate the gradients during the updates when the norms are sufficiently large.} 
Each Byzantine agent selects the transmitted vector based on the target regular node. Specifically, the transmitted vector is picked  uniformly at random inside the hyper-rectangle that guarantees that its state is not  discarded by the target node. 

Let $f (x) \triangleq \frac{1}{| \mathcal{R} |} \sum_{v_i \in \mathcal{R}} f_i(x)$ be the objective function (Problem \ref{prob: regular node}) evaluated at $x$, $f^* \triangleq \min_{x \in \mathbb{R}^d} f(x)$ be the optimal value of the objective function and $\bar{x}[k] \triangleq \frac{1}{| \mathcal{R} |} \sum_{v_i \in \mathcal{R}} x_i[k]$ be the average of the regular nodes' states at time-step $k$.
In Figure \ref{fig: numerical experiment}, the objective value evaluated at the average of all regular states $f ( \bar{x}[k] )$ is near the optimal value $f^*$ after $40$ iterations. Furthermore, the maximum of the objective values among all regular agents $\max_{v_i \in \mathcal{R}} f( x_i[k] )$ and the minimum of the objective values among all regular agents $\min_{v_i \in \mathcal{R}} f( x_i[k] )$ converge to the same value, which is an implication of the fact that the regular agents reach consensus (from Theorem~\ref{thm:consensus}).   

\begin{remark}
The bound in Theorem \ref{thm: main}, for large $k$, does not preclude 
$\| \bar{x}[k] - x^* \|_2 > \| \hat{x} - x^* \|_2$, where $x^*$ is a minimizer of $f$, and $f ( \bar{x}[k] ) > f( \hat{x} )$.
However, our experiments (with various settings), including the one above, show that the objective function value evaluated at the average of the states is much closer to the optimal value than that of the auxiliary point in practice, i.e., $f( \bar{x}[k] ) - f^* \ll f( \hat{x} ) - f^*$.
\end{remark}

\section{Conclusion and Future work}\label{sec:Conclusion and Future work}

In this paper, we developed a resilient distributed optimization algorithm for multi-dimensional functions. Our results  guarantee that the regular states asymptotically reach consensus and enter a bounded region that contains the global minimizer, irrespective of the actions of Byzantine agents, if the network topology satisfies certain conditions. We characterized the size of this region.  A promising avenue for future research would be to further refine the size of the convergence region, and to relax the conditions on the network topology. 

\appendix




\subsection{Additional Lemma}
We provide a lemma which will be utilized in \textbf{Step 2} of the proof of Theorem~\ref{thm: main}.
\begin{lemma}  \label{lem: max_angle}
If $x \notin \mathcal{B}(\hat{x}, R)$ then
\begin{align*}
    \max_{y \in \mathcal{B}(\hat{x}, R)}  \angle (x - y, \; x - \hat{x})  = \arcsin \Big( \frac{ R }{ \| x - \hat{x} \|_2} \Big). 
\end{align*}
\end{lemma}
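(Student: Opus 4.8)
The plan is to reduce everything to planar geometry. For a fixed $y \in \mathcal{B}(\hat{x}, R)$ the three points $x$, $\hat{x}$, $y$ lie in a common plane, so $\angle(x - y,\, x - \hat{x})$ is simply the angle at the vertex $x$ of the triangle they span, and elementary trigonometry on that triangle should deliver both the bound and its sharpness. Throughout I would write $\rho \triangleq \|x - \hat{x}\|_2$, noting that $x \notin \mathcal{B}(\hat{x}, R)$ gives $\rho > R \ge 0$.

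For the upper bound, fix $y$, set $r \triangleq \|y - \hat{x}\|_2 \le R$, $s \triangleq \|x - y\|_2$, and $\alpha \triangleq \angle(x - y,\, x - \hat{x})$. First dispose of the degenerate configurations: if $y = \hat{x}$, or if $x$, $\hat{x}$, $y$ are collinear, then $\alpha = 0$ and there is nothing to prove — here one should observe that $y$ cannot lie on the ray from $x$ pointing away from $\hat{x}$, since that would force $r = \|y - \hat{x}\|_2 \ge \rho > R$. For a genuine triangle, the law of cosines at the vertex $x$ gives $\cos\alpha = (\rho^2 + s^2 - r^2)/(2\rho s)$, which is strictly positive because $r \le R < \rho$; hence $\alpha \in [0, \pi/2)$. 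Then the law of sines, comparing vertex $x$ (opposite side of length $r$) with vertex $y$ (opposite side of length $\rho$), yields $\sin\alpha = (r/\rho)\sin(\text{angle at } y) \le r/\rho \le R/\rho$. Since $\alpha \in [0,\pi/2)$ and $\arcsin$ is increasing on $[0,1]$, this gives $\alpha \le \arcsin(R/\rho)$, establishing that the supremum is at most $\arcsin(R/\rho)$.

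For achievability, I would work inside any two-dimensional plane containing $x$ and $\hat{x}$ (one exists since $\rho > 0$). In that plane take the circle of radius $R$ about $\hat{x}$; as $\rho > R$, the point $x$ lies strictly outside it, so a tangent line from $x$ meets the circle at some point $y^\star$. Then $y^\star \in \mathcal{B}(\hat{x}, R)$ with $\|y^\star - \hat{x}\|_2 = R$, and the radius $\hat{x}y^\star$ is perpendicular to the tangent $xy^\star$, so the triangle $x\,\hat{x}\,y^\star$ has a right angle at $y^\star$. Consequently $\sin(\text{angle at } x) = \|y^\star - \hat{x}\|_2 / \|x - \hat{x}\|_2 = R/\rho$ with that angle acute, so $\angle(x - y^\star,\, x - \hat{x}) = \arcsin(R/\rho)$, which proves equality and also justifies writing $\max$ rather than $\sup$.

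The only point requiring real care — more bookkeeping than obstacle — is the interplay between $\sin$ and $\arcsin$: one needs $\alpha < \pi/2$ in order to pass from $\sin\alpha \le R/\rho$ to $\alpha \le \arcsin(R/\rho)$, and this is precisely where the strict inequality $\rho > R$ is used. The collinear/centre cases must be handled separately, but these are immediate. I do not expect any genuinely difficult step.
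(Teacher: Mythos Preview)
Your proof is correct and follows essentially the same geometric idea as the paper: reduce to the plane through $x$ and $\hat{x}$, and identify the tangent point $y^\star$ as the maximizer with a right angle at $y^\star$. Your treatment is in fact more careful than the paper's, which simply asserts ``from geometry'' that the tangent point maximizes the angle, whereas you supply an explicit upper-bound argument via the law of sines together with the observation $\alpha < \pi/2$.
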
 

\begin{proof}
Since the angle is measured with respect to the vector $x - \hat{x}$, consider any 2-D planes passed through the center $\hat{x}$ and the point $x$. Since the planes pass through $\hat{x}$, the intersections between of the ball $\mathcal{B}(\hat{x}, R)$ and the planes are great circles of radius $R$. Thus, all of the intersections generated from each plane are identical and we can consider the angle using a great circle instead of the ball. From geometry, the maximum angle $\phi = \angle (x - y^*, \; x - \hat{x})$ only occurs when the ray starting from the point $x$ touches the circle at point $y^*$. Therefore, $\angle ( \hat{x}-y^*, x-y^*) = \frac{\pi}{2}$ and $\| \hat{x} - y^* \|_2 = R$. We have
\begin{align*}
    \sin \phi = \frac{ \| \hat{x} - y^* \|_2 }{ \| \hat{x} - x \|_2 } = \frac{R}{\| \hat{x} - x \|_2}
\end{align*}
and the result follows.
\end{proof}

\subsection{Proof of Theorem~\ref{thm: main}}
We work towards the proof of Theorem \ref{thm: main} in several steps.  
For a fixed $\epsilon > 0$ and for all $\xi \in \mathbb{R}_{> 0}$, define the \textit{convergence radius}
\begin{align*}
    s^*( \xi ) \triangleq \max_{v_i \in \mathcal{R}} \big\{ \max \{ \Tilde{R}_i \sec \theta_i, \Tilde{R}_i + \delta_i \} \big\} + \xi.
\end{align*}
The key idea is that we partition the space $\mathbb{R}^d$ into $3$ regions: 
\begin{enumerate}[1.]
    \item $\mathcal{B} (\hat{x}, \; \max_{v_i \in \mathcal{R}} \{ \Tilde{R}_i + \delta_i \} )$,   \label{rgn2: 1}
    \item $\mathcal{B} (\hat{x}, \; s^*(\xi) ) \setminus \mathcal{B} (\hat{x}, \; \max_{v_i \in \mathcal{R}} \{ \Tilde{R}_i + \delta_i \} )$, and   \label{rgn2: 2}
    \item $\mathbb{R}^d \setminus \mathcal{B} (\hat{x}, \; s^*(\xi) )$.  \label{rgn2: 3}
\end{enumerate}

We refer to $\mathcal{B} (\hat{x}, \; s^*(\xi) )$ as the \textit{convergence region}.  First, for sufficiently large $k$, when every state $x_i[k]$ is in the convergence region (region 1 or 2), by using Proposition~\ref{prop: alg prop} and the gradient update \eqref{eqn: dynamic}, we show that the regular states cannot leave the convergence region (Steps 1 to 3 below).  Next, we show that if some state $x_i[k]$ is outside of the convergence region (i.e., in region \ref{rgn2: 3}), then by applying Proposition~\ref{prop: alg prop} and the gradient update \eqref{eqn: dynamic} repeatedly, every state will eventually enter the convergence region (Step 4). These steps will lead to the proof of Theorem~\ref{thm: main}.  Throughout the following exposition, we fix $\xi$ to be a strictly positive constant.

\textbf{Step 1:} We want to show that if $z_i[k] \in \mathcal{B} (\hat{x}, \max_{v_j \in \mathcal{R}} \{ \Tilde{R}_j + \delta_j \} ) \subset \mathcal{B} (\hat{x}, s^*( \xi ) )$ and $k$ is large enough, then by applying the gradient update \eqref{eqn: dynamic}, the state $x_i[k+1] \in \mathcal{B} (\hat{x}, s^*( \xi ) )$ i.e., $x_i[k+1]$ is still in the convergence region.  Let $k^*_1 \in \mathbb{N}$ be such that $\eta[k^*_1] \leq \frac{\xi}{L}$.
\begin{lemma} \label{lem: leq R_i}
For all $v_i \in \mathcal{R}$ and $k \geq k^*_1$, if $\| z_i[k] - \hat{x} \|_2 \leq \max_{v_j \in \mathcal{R}} \{ \Tilde{R}_j + \delta_j \} $ then $\| x_i[k+1] - \hat{x} \|_2 \leq s^*( \xi )$.  
\end{lemma}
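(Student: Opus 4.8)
The plan is to bound $\|x_i[k+1] - \hat{x}\|_2$ directly through the gradient update rule and the triangle inequality, using the hypothesis together with the fact that the step-size has become small enough by time $k_1^*$. There is essentially one substantive idea here: the gradient step can move $z_i[k]$ by at most $\eta[k]L$, and for $k \ge k_1^*$ this displacement is at most $\xi$, which is precisely the slack built into the definition of $s^*(\xi)$.

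Concretely, I would proceed as follows. First, invoke the update equation \eqref{eqn: dynamic}, $x_i[k+1] = z_i[k] - \eta[k] g_i[k]$, and apply the triangle inequality to get
\begin{align*}
    \| x_i[k+1] - \hat{x} \|_2
    \leq \| z_i[k] - \hat{x} \|_2 + \eta[k] \, \| g_i[k] \|_2.
\end{align*}
Next, bound the first term on the right by the hypothesis $\| z_i[k] - \hat{x} \|_2 \leq \max_{v_j \in \mathcal{R}} \{ \Tilde{R}_j + \delta_j \}$, and the second term using $\| g_i[k] \|_2 \leq L$ from Assumption~\ref{ass: gradient_bound} together with the monotonicity of the step-sizes from Assumption~\ref{ass: step-size}: since $k \geq k_1^*$ and $\eta$ is decreasing, $\eta[k] \leq \eta[k_1^*] \leq \xi / L$, hence $\eta[k] \, \| g_i[k] \|_2 \leq \xi$. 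Combining these gives $\| x_i[k+1] - \hat{x} \|_2 \leq \max_{v_j \in \mathcal{R}} \{ \Tilde{R}_j + \delta_j \} + \xi$.

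Finally, I would observe that this last quantity is at most $s^*(\xi)$, because by definition $s^*(\xi) = \max_{v_i \in \mathcal{R}} \big\{ \max \{ \Tilde{R}_i \sec \theta_i, \Tilde{R}_i + \delta_i \} \big\} + \xi \geq \max_{v_j \in \mathcal{R}} \{ \Tilde{R}_j + \delta_j \} + \xi$, which completes the argument. There is no real obstacle in this lemma; the only point requiring care is to cite the decreasing step-size assumption so that $k \geq k_1^*$ (rather than $k = k_1^*$) is enough to guarantee $\eta[k] L \leq \xi$, and to make sure the ``$\max$'' over $\mathcal{R}$ in the hypothesis matches the corresponding term inside $s^*(\xi)$.
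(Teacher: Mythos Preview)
Your proposal is correct and follows essentially the same approach as the paper: apply the triangle inequality to the gradient update~\eqref{eqn: dynamic}, bound the gradient step by $\eta[k]L \le \eta[k_1^*]L \le \xi$ via Assumptions~\ref{ass: gradient_bound} and~\ref{ass: step-size}, and then use the definition of $s^*(\xi)$ to absorb the $\xi$ slack. The paper's proof is line-for-line the same argument.
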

\begin{proof}
From the update \eqref{eqn: dynamic}, we have
\begin{align*}
    \| x_i[k+1] - \hat{x} \|_2 &= \| z_i[k] - \hat{x} - \eta[k] g_i[k] \|_2 \\
    &\leq \| z_i[k] - \hat{x} \|_2 + \eta[k] \| g_i[k] \|_2  .
\end{align*}
Using Assumption \ref{ass: gradient_bound} and \ref{ass: step-size}, and $\| z_i[k] - \hat{x} \|_2 \leq \max_{v_j \in \mathcal{R}} \{ \Tilde{R}_j + \delta_j \} $, we obtain
\begin{align*}
    \| x_i[k+1] - \hat{x} \|_2 &\leq \max_{v_j \in \mathcal{R}} \{ \Tilde{R}_j + \delta_j \}  + \eta[k] L  \\
    &\leq \max_{v_j \in \mathcal{R}} \{ \Tilde{R}_j + \delta_j \}  + \eta[k_1^*] L  .
\end{align*}
By the definition of $k_1^*$ and $s^*( \xi )$, the above inequality becomes
\begin{align*}
    \| x_i[k+1] - \hat{x} \|_2 \leq \max_{v_j \in \mathcal{R}} \{ \Tilde{R}_j + \delta_j \}  + \xi 
    \leq s^*( \xi ). 
\end{align*}
\end{proof}

\textbf{Step 2:} We want to analyze the relationship between the terms $\| x_i[k+1] - \hat{x} \|_2$ and $\| z_i[k] - \hat{x} \|_2$ which will be used in the subsequent step.

For $v_i \in \mathcal{R}$, define $\Delta_i: [ \Tilde{R}_i, \infty) \times \mathbb{R}_+ \to \mathbb{R}$ to be the function
\begin{align}
    \Delta_i( p, l ) \triangleq 2 l  (\sqrt{ p^2 - \Tilde{R}_i^2 } \cos \theta_i - \Tilde{R}_i \sin \theta_i) - l^2,  \label{def: delta}
\end{align}
and $\Gamma_i: [ \Tilde{R}_i, \infty) \times \mathbb{R}_+ \to \mathbb{R}$ to be the function
\begin{align}
    \Gamma_i( p, l ) \triangleq p^2 - \Delta_i( p, l ) .   \label{def: gamma}
\end{align}
\begin{lemma}
For all $v_i \in \mathcal{R}$ and $k \in \mathbb{N}$, if $\| z_i[k] - \hat{x} \|_2 > \Tilde{R}_i + \delta_i$ then 
\begin{align*}
    &\| x_i[k+1] - \hat{x} \|_2^2 \\
    &\leq \| z_i[k] - \hat{x} \|_2^2 - \Delta_i( \| z_i[k] - \hat{x} \|_2, \; \eta[k] \; \| g_i[k] \|_2 )  \\
    &= \Gamma_i( \| z_i[k] - \hat{x} \|_2, \; \eta[k] \; \| g_i[k] \|_2 ).  
\end{align*}  \label{lem: z_to_x}
\end{lemma}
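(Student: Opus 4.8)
The plan is to expand the squared post-gradient distance and reduce the claim to a single inner-product inequality. Writing $p \triangleq \|z_i[k]-\hat{x}\|_2$ and $l \triangleq \eta[k]\,\|g_i[k]\|_2$, the update \eqref{eqn: dynamic} gives $\|x_i[k+1]-\hat{x}\|_2^2 = p^2 - 2\eta[k]\langle g_i[k],\,z_i[k]-\hat{x}\rangle + l^2$, while by \eqref{def: delta} we have $p^2 - \Delta_i(p,l) = p^2 - 2l\bigl(\sqrt{p^2-\Tilde{R}_i^2}\cos\theta_i - \Tilde{R}_i\sin\theta_i\bigr) + l^2$. Cancelling the $p^2$ and $l^2$ terms and rearranging, the asserted bound is therefore equivalent to
\begin{align*}
\langle g_i[k],\,z_i[k]-\hat{x}\rangle \;\ge\; \|g_i[k]\|_2\bigl(\sqrt{p^2-\Tilde{R}_i^2}\cos\theta_i - \Tilde{R}_i\sin\theta_i\bigr),
\end{align*}
so it suffices to lower bound $\cos\angle(g_i[k],\,z_i[k]-\hat{x})$.

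First I would check that $z_i[k]\notin\mathcal{C}_i(\epsilon)$: since $\Tilde{R}_i = \|x_i^*-\hat{x}\|_2$, the hypothesis $p > \Tilde{R}_i + \delta_i$ and the triangle inequality yield $\|z_i[k]-x_i^*\|_2 \ge p - \Tilde{R}_i > \delta_i$, and $\mathcal{C}_i(\epsilon)\subseteq\mathcal{B}(x_i^*,\delta_i)$ then forces $z_i[k]\notin\mathcal{C}_i(\epsilon)$; in particular $g_i[k]\neq\mathbf{0}$ by convexity, so the angles below are well defined. Proposition~\ref{prop: grad} applied to $f_i$ gives $\angle(g_i[k],\,z_i[k]-x_i^*)\le\theta_i$. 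If $\Tilde{R}_i=0$ then $x_i^*=\hat{x}$ and this already reads $\angle(g_i[k],\,z_i[k]-\hat{x})\le\theta_i<\tfrac{\pi}{2}$, from which the inner-product bound is immediate. Otherwise, since $x_i^*\in\mathcal{B}(\hat{x},\Tilde{R}_i)$ and $p>\Tilde{R}_i$ (so $z_i[k]\notin\mathcal{B}(\hat{x},\Tilde{R}_i)$), Lemma~\ref{lem: max_angle} bounds $\angle(z_i[k]-x_i^*,\,z_i[k]-\hat{x})\le\arcsin(\Tilde{R}_i/p)\triangleq\beta_i$, and adding the two angle estimates via the triangle inequality for angles (\cite[Corollary~12]{castano2016angles}, already invoked in the proof of Proposition~\ref{prop: true_sol}) gives $\angle(g_i[k],\,z_i[k]-\hat{x})\le\theta_i+\beta_i$.

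To conclude, note $\theta_i<\tfrac{\pi}{2}$ and $\beta_i<\tfrac{\pi}{2}$, so $\theta_i+\beta_i\in[0,\pi)$ and, cosine being decreasing there, $\cos\angle(g_i[k],\,z_i[k]-\hat{x})\ge\cos(\theta_i+\beta_i)$. Using $\sin\beta_i=\Tilde{R}_i/p$ and $\cos\beta_i=\sqrt{p^2-\Tilde{R}_i^2}/p$ in the angle-addition formula gives $p\cos(\theta_i+\beta_i)=\sqrt{p^2-\Tilde{R}_i^2}\cos\theta_i-\Tilde{R}_i\sin\theta_i$, hence $\langle g_i[k],\,z_i[k]-\hat{x}\rangle = \|g_i[k]\|_2\,p\cos\angle(g_i[k],\,z_i[k]-\hat{x}) \ge \|g_i[k]\|_2\bigl(\sqrt{p^2-\Tilde{R}_i^2}\cos\theta_i-\Tilde{R}_i\sin\theta_i\bigr)$, which is exactly the inequality required; substituting back into the expansion yields $\|x_i[k+1]-\hat{x}\|_2^2\le p^2-\Delta_i(p,l)$, and the stated equality with $\Gamma_i$ is just \eqref{def: gamma}. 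The computation is mechanical once the geometry is set up; the only steps needing care are verifying the hypothesis of Lemma~\ref{lem: max_angle} (that $z_i[k]$ genuinely lies outside $\mathcal{B}(\hat{x},\Tilde{R}_i)$) and confirming $\theta_i+\beta_i<\pi$ so that passing to cosines preserves the inequality direction. The fortunate feature — and presumably the reason $\Delta_i$ is defined exactly as it is — is that the bound $p\cos(\theta_i+\beta_i)$ collapses \emph{precisely} onto the expression inside $\Delta_i$, so no slack is lost at this last step.
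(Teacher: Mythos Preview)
Your proof is correct and follows essentially the same route as the paper's: both expand $\|x_i[k+1]-\hat{x}\|_2^2$ via the law of cosines (equivalently, the inner-product expansion you wrote), bound the relevant angle by $\theta_i+\arcsin(\Tilde{R}_i/p)$ using Proposition~\ref{prop: grad}, Lemma~\ref{lem: max_angle}, and the triangle inequality for angles from \cite{castano2016angles}, and then collapse $p\cos(\theta_i+\beta_i)$ into the expression defining $\Delta_i$ via the angle-addition formula. Your version is in fact slightly more careful in that you explicitly verify $z_i[k]\notin\mathcal{C}_i(\epsilon)$ before invoking Proposition~\ref{prop: grad} and you separately treat the degenerate case $\Tilde{R}_i=0$, both of which the paper leaves implicit.
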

\begin{proof}
Consider $v_i \in \mathcal{R}$. Define $\| z_i[k] - \hat{x} \|_2 \triangleq s_i[k]$ and $z_i[k] \notin \mathcal{B}(\hat{x}, \Tilde{R}_i + \delta_i)$. By Lemma~\ref{lem: max_angle}, we have
\begin{align}
    & \qquad \angle (x_i^* - z_i[k], \hat{x} - z_i[k])  \nonumber \\
    &\leq \max_{y \in \mathcal{B}(\hat{x}, \Tilde{R}_i)}  \angle (y - z_i[k], \hat{x} - z_i[k])  \nonumber \\
    &= \arcsin \frac{\Tilde{R}_i}{s_i[k]} \triangleq \phi_i(s_i[k]). \label{def: angle_phi}
\end{align}
Let $\psi_i[k] \triangleq \angle ( x_i[k+1] - z_i[k], \; \hat{x} - z_i[k]) = \angle ( - \eta[k] g_i[k], \; \hat{x} - z_i[k] )$.
Since for all $v_i \in \mathcal{R}$ and $k$,
\begin{multline*}
    \angle ( - \eta[k] g_i[k], \; \hat{x} - z_i[k] ) \leq \angle ( - \eta[k] g_i[k], \; x_i^* - z_i[k]  ) \\
    + \angle ( x_i^* - z_i[k], \; \hat{x} - z_i[k])
\end{multline*}
by \cite[Corollary~12]{castano2016angles}, applying Proposition \ref{prop: grad} and inequality \eqref{def: angle_phi}, we have
\begin{align}
    \psi_i[k] \leq \theta_i + \phi_i(s_i[k]) \triangleq \Tilde{\psi}_i(s_i[k]).  \label{eqn: angle}
\end{align}
Note that $\Tilde{\psi}_i(s_i[k]) < \pi$ since $\theta_i < \frac{\pi}{2}$ and $\phi_i(s_i[k]) \leq \frac{\pi}{2}$.
Consider the triangle which has the vertices at $x_i[k+1]$, $z_i[k]$, and $\hat{x}$.
We can calculate the square of the distance by using the law of cosines:
\begin{multline*}
    \| x_i[k+1] - \hat{x} \|_2^2 = \; \| x_i[k+1] - z_i[k] \|^2_2 + \| \hat{x} - z_i[k] \|^2_2 \\
    - 2 \| x_i[k+1] - z_i[k] \|_2 \cdot \| z_i[k] - \hat{x} \|_2 \\
    \times \cos \angle (x_i[k+1] - z_i[k], \; \hat{x} - z_i[k]  )  .
\end{multline*}    
Using the update \eqref{eqn: dynamic} and $\| z_i[k] - \hat{x} \|_2 = s_i[k]$, we get 
\begin{align}    
    \| x_i[k+1] - \hat{x} \|_2^2 &=   \eta[k]^2 \; \| g_i[k] \|_2^2 + s_i[k]^2 \nonumber \\
    & \quad - 2 s_i[k] \; \eta[k] \; \| g_i[k] \|_2 \cos \psi_i[k]  \nonumber \\
    &\leq  \eta[k]^2 \; \| g_i[k] \|_2^2 + s_i[k]^2 \nonumber \\
    &\quad - 2 s_i[k] \; \eta[k] \; \| g_i[k] \|_2 \cos \Tilde{\psi}_i(s_i[k]). \label{eqn: dist1}
\end{align}
The inequality above is obtained by using the inequality \eqref{eqn: angle}. In addition, we can simplify the term $s_i[k] \cos \Tilde{\psi}_i(s_i[k])$ as follows:
\begin{align*}
    s_i[k] \cos \Tilde{\psi}_i(s_i[k]) &= s_i[k] \cos \Big( \arcsin \frac{\Tilde{R}_i}{s_i[k]} + \theta_i \Big)  \\
    &= s_i[k] \cos \Big( \arcsin \frac{\Tilde{R}_i}{s_i[k]} \Big) \cos \theta_i \\
    &\quad - s_i[k] \sin \Big( \arcsin \frac{\Tilde{R}_i}{s_i[k]} \Big) \sin \theta_i  \\
    &= \sqrt{ s_i[k]^2 - \Tilde{R}_i^2 } \cos \theta_i - \Tilde{R}_i \sin \theta_i . 
\end{align*}
Therefore, we can rewrite \eqref{eqn: dist1} as 
\begin{multline*}
    \| x_i[k+1] - \hat{x} \|_2^2 \leq \eta[k]^2 \; \| g_i[k] \|_2^2 + s_i[k]^2 \\
    - 2 \; \eta[k] \; \| g_i[k] \|_2 \; (\sqrt{ s_i[k]^2 - \Tilde{R}_i^2 } \cos \theta_i - \Tilde{R}_i \sin \theta_i). 
\end{multline*}
\end{proof}

\textbf{Step 3:} We want to show that if $z_i[k] \in \mathcal{B} (\hat{x}, s^*( \xi ) )$, $z_i[k] \notin \mathcal{B} (\hat{x}, \max_{v_j \in \mathcal{R}} \{ \Tilde{R}_j + \delta_j \} )$ and $k$ is large enough, then by applying the gradient update \eqref{eqn: dynamic}, the state $x_i[k+1] \in \mathcal{B} (\hat{x}, s^*( \xi ) )$ i.e., $x_i[k+1]$ is still in the convergence region.

Let 
\begin{align*}
    a_i^+ &\triangleq -\Tilde{R}_i \sin \theta_i + \sqrt{ \big( s^*(\xi) \big)^2 - \Tilde{R}_i^2 \cos^2 \theta_i } \; , \\
    a_i^- &\triangleq -\Tilde{R}_i \sin \theta_i - \sqrt{ \big( s^*(\xi) \big)^2 - \Tilde{R}_i^2 \cos^2 \theta_i } \;, \quad \text{and} \\
    b_i &\triangleq 2 \big( \sqrt{ \big( s^*(\xi) \big)^2 - \Tilde{R}_i^2 } \cos \theta_i - \Tilde{R}_i \sin \theta_i \big).
\end{align*}
\noindent Let $k_2^*$ be such that 
\begin{align*}
    \eta[k_2^*] \leq \frac{1}{L} \min_{v_i \in \mathcal{R}} \big\{ \min \{ a_i^+, \; b_i \} \big\}.
\end{align*}
\begin{lemma}
For all $i \in \mathcal{R}$ and $k \geq k_2^*$, if $\| z_i[k] - \hat{x} \|_2 \in \big( \max_{v_j \in \mathcal{R}} \{ \Tilde{R}_j + \delta_j \}, s^*(\xi) \big]$ then $\| x_i[k+1] - \hat{x} \|_2   \leq s^*(\xi)$.   \label{lem: leq R_i to s_i}
\end{lemma}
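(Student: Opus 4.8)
The plan is to combine Lemma~\ref{lem: z_to_x} with a short one-variable analysis of the function $\Gamma_i$ from \eqref{def: gamma}. Fix $v_i \in \mathcal{R}$ and $k \geq k_2^*$, and abbreviate $s_i[k] \triangleq \| z_i[k] - \hat{x} \|_2$ and $l_i[k] \triangleq \eta[k] \, \| g_i[k] \|_2$. The hypothesis gives $s_i[k] \in \big( \max_{v_j \in \mathcal{R}} \{ \Tilde{R}_j + \delta_j \}, \, s^*(\xi) \big]$, so in particular $s_i[k] > \Tilde{R}_i + \delta_i$ and Lemma~\ref{lem: z_to_x} applies, yielding $\| x_i[k+1] - \hat{x} \|_2^2 \leq \Gamma_i\big( s_i[k], \, l_i[k] \big)$. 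By Assumption~\ref{ass: gradient_bound} and the decreasing step-sizes of Assumption~\ref{ass: step-size}, $0 \leq l_i[k] \leq \eta[k] L \leq \eta[k_2^*] L \leq \min\{ a_i^+, b_i \}$. It therefore suffices to show that $\Gamma_i( s, l ) \leq \big( s^*(\xi) \big)^2$ for every $s \in [\, \Tilde{R}_i, \, s^*(\xi)\, ]$ and every $l \in [\, 0, \, \min\{ a_i^+, b_i \}\, ]$.

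The crucial structural fact is that $\Gamma_i(\cdot, l)$ is convex on $[\Tilde{R}_i, \infty)$: a direct computation from \eqref{def: delta} and \eqref{def: gamma} gives $\partial_p^2 \Gamma_i(p, l) = 2 + \frac{2 l \, \Tilde{R}_i^2 \cos\theta_i}{(p^2 - \Tilde{R}_i^2)^{3/2}} \geq 2 > 0$ for $p > \Tilde{R}_i$ (using $l \geq 0$ and $\cos\theta_i > 0$, since $\theta_i < \tfrac{\pi}{2}$ by Proposition~\ref{prop: grad}), with continuity at $p = \Tilde{R}_i$. A convex function on an interval is bounded above by the larger of its two endpoint values, so $\Gamma_i( s, l ) \leq \max\big\{ \Gamma_i( \Tilde{R}_i, l ), \, \Gamma_i( s^*(\xi), l ) \big\}$ for $s \in [\Tilde{R}_i, s^*(\xi)]$, and it remains only to control the two endpoints. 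Evaluating at $p = \Tilde{R}_i$ gives $\Gamma_i( \Tilde{R}_i, l ) = \Tilde{R}_i^2 + 2 l \, \Tilde{R}_i \sin\theta_i + l^2$; as a quadratic in $l$ this is at most $\big( s^*(\xi) \big)^2$ exactly on the interval between its two roots, which are precisely $a_i^-$ and $a_i^+$, and since $a_i^- \leq 0 \leq l \leq a_i^+$ the bound holds. Evaluating at $p = s^*(\xi)$ gives $\Gamma_i( s^*(\xi), l ) = \big( s^*(\xi) \big)^2 + l^2 - l \, b_i = \big( s^*(\xi) \big)^2 + l\,( l - b_i )$, which is at most $\big( s^*(\xi) \big)^2$ for $0 \leq l \leq b_i$. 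Assembling these facts gives $\| x_i[k+1] - \hat{x} \|_2^2 \leq \big( s^*(\xi) \big)^2$, as claimed.

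Two preliminary positivity checks are needed so that $k_2^*$ is well defined: $a_i^+ > 0$ and $b_i > 0$ for each $v_i \in \mathcal{R}$. For $a_i^+$, the definition of $s^*(\xi)$ gives $s^*(\xi) \geq \Tilde{R}_i \sec\theta_i + \xi > \Tilde{R}_i$, hence $\sqrt{ (s^*(\xi))^2 - \Tilde{R}_i^2 \cos^2\theta_i } > \Tilde{R}_i \sin\theta_i$, so $a_i^+ > 0$. For $b_i$, the $\Tilde{R}_i \sec\theta_i$ term inside the definition of $s^*(\xi)$ forces $(s^*(\xi))^2 > \Tilde{R}_i^2 \sec^2\theta_i$, hence $\sqrt{ (s^*(\xi))^2 - \Tilde{R}_i^2 } > \Tilde{R}_i \tan\theta_i$ and therefore $\sqrt{ (s^*(\xi))^2 - \Tilde{R}_i^2 } \cos\theta_i > \Tilde{R}_i \sin\theta_i$, i.e. $b_i > 0$. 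I expect the only real obstacle here to be the recognition that $a_i^+$ (resp. $b_i$) is exactly the largest admissible $l$ for which $\Gamma_i$ evaluated at the endpoint $\Tilde{R}_i$ (resp. $s^*(\xi)$) stays below the target $\big(s^*(\xi)\big)^2$; once this is seen together with the convexity of $\Gamma_i(\cdot, l)$ in its first argument, the remaining computations are routine algebra. The degenerate case $\Tilde{R}_i = 0$ causes no difficulty, since then $\Gamma_i(p,l) = (p - l\cos\theta_i)^2 + l^2 \sin^2\theta_i$ and the same endpoint argument applies verbatim.
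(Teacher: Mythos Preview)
Your proof is correct and follows essentially the same approach as the paper's own argument: apply Lemma~\ref{lem: z_to_x}, use the convexity of $\Gamma_i(\cdot,l)$ on $[\Tilde{R}_i,\infty)$ (via the second derivative) to reduce to the two endpoints $p=\Tilde{R}_i$ and $p=s^*(\xi)$, and then verify that $a_i^+$ and $b_i$ are precisely the thresholds on $l$ at which those endpoint values equal $(s^*(\xi))^2$. Your treatment is slightly more explicit about the quadratic factorizations and the positivity checks for $a_i^+$ and $b_i$, but the structure and the key ideas coincide with the paper's proof.
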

\begin{proof}
Consider the function $\Gamma_i( p, l )$ defined in \eqref{def: gamma}. Compute the second derivative with respect to $p$:
\begin{align*}
    &\frac{\partial \Gamma_i}{\partial p} = 2p - 2 l p  (p^2 - \Tilde{R}_i^2)^{- \frac{1}{2}} \cos \theta_i, \\
    &\frac{\partial^2 \Gamma_i}{\partial p^2} = 2 + 2 l \Tilde{R}_i^2 (p^2 - \Tilde{R}_i^2)^{- \frac{3}{2}} \cos \theta_i .
\end{align*}
Note that $\frac{\partial^2 \Gamma_i}{\partial p^2} > 0$ for all $p \in (\Tilde{R}_i, \infty)$. This implies that 
\begin{multline}
    \sup_{ p \in ( \max_{v_j \in \mathcal{R}} \{ \Tilde{R}_j + \delta_j \}, \; s^*(\xi) ] } \Gamma_i (p, l) 
    \leq \max_{ p \in [ \Tilde{R}_i, \; s^*(\xi) ] } \Gamma_i (p, l)   \\
     = \max \big\{ \Gamma_i( \Tilde{R}_i, l ), \; \Gamma_i ( s^*(\xi), l ) \big\}.  \label{eqn: gamma chain}
\end{multline} 
First, let consider $\Gamma_i( \Tilde{R}_i, \; l )$.
From the definition of $\Gamma_i$ in \eqref{def: gamma}, we have that
\begin{align}
    \Gamma_i ( \Tilde{R}_i, l ) \leq \big( s^*(\xi) \big)^2 \quad \Longleftrightarrow \quad l \in [ a_i^-, \; a_i^+ ].   \label{eqn: gamma_leq_1}
\end{align}
Note that $a_i^+ > 0$ and $a_i^- < 0$ since $ s^*(\xi)  \geq \Tilde{R}_i + \xi$.
Using Assumption \ref{ass: gradient_bound}, Assumption \ref{ass: step-size}, and the definition of $k_2^*$, we have that for all $v_i \in \mathcal{R}$ and $k \geq k_2^*$,
\begin{align*}
    \eta[k] \; \| g_i[k] \|_2 \leq \eta[k] L \leq \min_{v_j \in \mathcal{R}} \big\{ \min \{ a_j^+, \; b_j \} \big\} \leq  a_i^+ .
\end{align*}
By \eqref{eqn: gamma_leq_1}, we obtain that for all $v_i \in \mathcal{R}$ and $k \geq k_2^*$,
\begin{align}
     \Gamma_i ( \Tilde{R}_i, \; \eta[k] \; \| g_i[k] \|_2 ) \leq \big( s^*(\xi) \big)^2. \label{eqn: gamma ineq1}
\end{align}
Now, let consider $\Gamma_i ( s^*(\xi), \; l )$. 
From the definition of $\Gamma_i$ in \eqref{def: gamma}, we have that
\begin{align}
     \Gamma_i ( s^*(\xi), l ) \leq \big( s^*(\xi) \big)^2  \quad \Longleftrightarrow \quad
     l \in [ 0, \; b_i ].  \label{eqn: gamma_leq_2}
\end{align}
Note that $b_i > 0$ since $s^*(\xi)  \geq \Tilde{R}_i \sec \theta_i + \xi$.
Using Assumption \ref{ass: gradient_bound}, Assumption \ref{ass: step-size}, and the definition of $k_2^*$, we have that for all $v_i \in \mathcal{R}$ and $k \geq k_2^*$,
\begin{align*}
    \eta[k] \; \| g_i[k] \|_2 \leq \eta[k] L \leq \min_{v_j \in \mathcal{R}} \big\{ \min \{ a_j^+, \; b_j \} \big\} \leq  b_i .
\end{align*}
By \eqref{eqn: gamma_leq_2}, we obtain that for all $v_i \in \mathcal{R}$ and $k \geq k_2^*$,
\begin{align}
     \Gamma_i ( s^*(\xi), \; \eta[k] \; \| g_i[k] \|_2 ) \leq \big( s^*(\xi) \big)^2.  \label{eqn: gamma ineq2}
\end{align}
Combine \eqref{eqn: gamma ineq1} and \eqref{eqn: gamma ineq2} to get that for all $v_i \in \mathcal{R}$ and $k \geq k_2^*$,
\begin{multline}
    \max \big\{ \Gamma_i ( \Tilde{R}_i, \; \eta[k] \; \| g_i[k] \|_2 ), \; \Gamma_i ( s^*(\xi), \; \eta[k] \; \| g_i[k] \|_2 ) \big\} \\
    \leq \big( s^*(\xi) \big)^2. \label{eqn: max bound s}
\end{multline}
Using Lemma \ref{lem: z_to_x}, \eqref{eqn: gamma chain} and \eqref{eqn: max bound s}, respectively, we obtain that for all $v_i \in \mathcal{R}$ and $k \geq k_2^*$,
\begin{align*}
    & \quad \| x_i[k+1] - \hat{x} \|_2^2  \\
    &\leq \Gamma_i ( \| z_i[k] - \hat{x} \|_2, \; \eta[k] \; \| g_i[k] \|_2 )  \\
    &\leq \sup_{ p \in ( \max_{v_j \in \mathcal{R}} \{ \Tilde{R}_j + \delta_j \}, \; s^*(\xi) ] } \Gamma_i (p, \; \eta[k] \; \| g_i[k] \|_2) \\
    &\leq \max \big\{ \Gamma_i ( \Tilde{R}_i, \; \eta[k] \; \| g_i[k] \|_2 ), \;
    \Gamma_i ( s^*(\xi), \; \eta[k] \; \| g_i[k] \|_2 ) \big\} \\
    &\leq \big( s^*(\xi) \big)^2.
\end{align*}
\end{proof}

\textbf{Step 4:} 
We now show that the states of all regular nodes eventually enter the convergence region.

\begin{lemma}
    There exists $k \geq \max \{ k_1^*, k_2^*  \}$ such that for all $v_i \in \mathcal{R}$, $ \| x_i[k] - \hat{x} \|_2 \leq  s^*(\xi) $.  \label{lem: geq s}
\end{lemma}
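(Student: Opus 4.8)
The plan is to track the quantity $M[k] \triangleq \max_{v_i \in \mathcal{R}} \| x_i[k] - \hat{x} \|_2$ and to show it must drop to $s^*(\xi)$ at some finite time past $\max\{k_1^*, k_2^*\}$. I would argue by contradiction: fix $K_0 \geq \max\{k_1^*,k_2^*\}$ large enough that $\eta[K_0]\,L \leq \min_{v_j \in \mathcal{R}}\beta_j$, where $\beta_j \triangleq \sqrt{(s^*(\xi))^2 - \Tilde{R}_j^2}\,\cos\theta_j - \Tilde{R}_j\sin\theta_j$ (such $K_0$ exists since $\eta[k]\to 0$), and suppose for contradiction that $M[k] > s^*(\xi)$ for every $k \geq K_0$. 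Note $\beta_j > 0$: the map $p \mapsto \sqrt{p^2 - \Tilde{R}_j^2}\cos\theta_j - \Tilde{R}_j\sin\theta_j$ is increasing and vanishes at $p = \Tilde{R}_j\sec\theta_j$, while $s^*(\xi) \geq \Tilde{R}_j\sec\theta_j + \xi > \Tilde{R}_j\sec\theta_j$.

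Fix any $k \geq K_0$ and pick a regular node $v_i$ attaining $\| x_i[k+1] - \hat{x}\|_2 = M[k+1]$. Since $M[k+1] > s^*(\xi)$, the state $z_i[k]$ cannot lie in Region~1 (otherwise Lemma~\ref{lem: leq R_i} gives $\| x_i[k+1] - \hat{x}\|_2 \leq s^*(\xi)$) nor in Region~2 (otherwise Lemma~\ref{lem: leq R_i to s_i} gives the same). Hence $\| z_i[k] - \hat{x}\|_2 > s^*(\xi) \geq \Tilde{R}_i + \delta_i$, so Lemma~\ref{lem: z_to_x} yields
\begin{equation*}
    M[k+1]^2 \;\leq\; \| z_i[k] - \hat{x}\|_2^2 \;-\; \Delta_i\big( \| z_i[k] - \hat{x}\|_2, \; \eta[k]\,\| g_i[k]\|_2 \big).
\end{equation*}
Moreover, applying Proposition~\ref{prop: alg prop} with $R_i[k] = \max_{v_j \in (\mathcal{N}_i^-\cap\mathcal{R})\cup\{v_i\}} \| x_j[k] - \hat{x}\|_2 \leq M[k]$ gives $\| z_i[k] - \hat{x}\|_2 \leq M[k]$. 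The crux is then to lower-bound $\Delta_i$ by a fixed positive multiple of $\eta[k]$.

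For that, note that $z_i[k]$ being in Region~3 forces, via the triangle inequality, $\| z_i[k] - x_i^*\|_2 \geq \| z_i[k] - \hat{x}\|_2 - \Tilde{R}_i > \delta_i$, so $z_i[k] \notin \mathcal{C}_i(\epsilon)$; combining the subgradient inequality $\| g_i[k]\|_2\,\| z_i[k] - x_i^*\|_2 \geq f_i(z_i[k]) - f_i(x_i^*)$ with the slope bound $\frac{f_i(z_i[k]) - f_i(x_i^*)}{\| z_i[k] - x_i^*\|_2} \geq \frac{\epsilon}{\delta_i(\epsilon)}$ from \eqref{eqn: lb_grad} gives $\| g_i[k]\|_2 \geq \gamma_i \triangleq \frac{\epsilon}{\delta_i(\epsilon)} > 0$. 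Since $k \geq K_0$ implies $0 < \eta[k]\| g_i[k]\|_2 \leq \eta[k]L \leq \beta_i$, and since $\sqrt{p^2 - \Tilde{R}_i^2}\cos\theta_i - \Tilde{R}_i\sin\theta_i \geq \beta_i$ for $p = \|z_i[k]-\hat{x}\|_2 \geq s^*(\xi)$, the definition \eqref{def: delta} yields
\begin{align*}
    \Delta_i\big( \|z_i[k]-\hat{x}\|_2,\; \eta[k]\|g_i[k]\|_2 \big)
    &\geq 2\,\eta[k]\|g_i[k]\|_2\,\beta_i - \big(\eta[k]\|g_i[k]\|_2\big)^2 \\
    &\geq \eta[k]\,\|g_i[k]\|_2\,\beta_i \;\geq\; \gamma_i\beta_i\,\eta[k] \;\triangleq\; c_i\,\eta[k].
\end{align*}
Setting $c \triangleq \min_{v_j\in\mathcal{R}} c_j > 0$ and combining with $\|z_i[k]-\hat{x}\|_2 \leq M[k]$, I obtain $M[k+1]^2 \leq M[k]^2 - c\,\eta[k]$ for all $k \geq K_0$. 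Telescoping, $M[K_0+n]^2 \leq M[K_0]^2 - c\sum_{k=K_0}^{K_0+n-1}\eta[k] \to -\infty$ as $n\to\infty$ by Assumption~\ref{ass: step-size}, contradicting $M[\cdot]^2 \geq 0$. Hence the hypothesis is false, and some $k \geq K_0 \geq \max\{k_1^*,k_2^*\}$ satisfies $M[k] \leq s^*(\xi)$, i.e. $\|x_i[k]-\hat{x}\|_2 \leq s^*(\xi)$ for all $v_i\in\mathcal{R}$.

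I expect the main obstacle to be the gradient lower bound $\| g_i[k]\|_2 \geq \gamma_i$: it is what guarantees the gradient update removes an $\Omega(\eta[k])$ amount from the squared distance rather than a vanishing amount, and it is exactly here that boundedness of $\argmin f_i$ (encoded through $\delta_i(\epsilon)$, hence $\gamma_i$) is essential. The rest — the three-region split for the node attaining $M[k+1]$, the reduction from $z_i[k]$ to the regular in-neighbors via Proposition~\ref{prop: alg prop}, and the divergent-series telescoping — is routine bookkeeping once that estimate is in hand.
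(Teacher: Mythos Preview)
Your proof is correct and follows essentially the same route as the paper's: a contradiction argument tracking $\max_{v_i\in\mathcal{R}}\|x_i[k]-\hat{x}\|_2$, the three-region split via Lemmas~\ref{lem: leq R_i} and~\ref{lem: leq R_i to s_i} to force $z_i[k]$ into Region~3, the subgradient-norm lower bound $\|g_i[k]\|_2 \geq \epsilon/\delta_i(\epsilon)$ from \eqref{eqn: lb_grad}, the resulting $\Delta_i \geq c_i\,\eta[k]$ estimate, and a divergent-series telescoping. The only cosmetic differences are that the paper organizes the argument through the index sets $\mathcal{I}_x[k],\mathcal{I}_z[k]$ rather than picking a single maximizer, and names the threshold time $k_3^*$ (your $K_0$) and constants $b_i = 2\beta_i$, $\kappa_i = \gamma_i$ separately.
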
 
\begin{proof}
Assume for the purpose of contradiction that for all $k \geq \max \{ k_1^*, k_2^*  \}$ there exists $v_i \in \mathcal{R}$ such that $\| x_i[k] - \hat{x} \|_2 >  s^*(\xi)$.
Define the sets of nodes
\begin{align*}
    \mathcal{I}_x[k] \triangleq \{ v_i \in \mathcal{R} : \| x_i[k] - \hat{x} \|_2 >  s^*(\xi) \}
\end{align*}
and
\begin{align*}
    \mathcal{I}_z[k] \triangleq \{ v_i \in \mathcal{R} : \| z_i[k] - \hat{x} \|_2 >  s^*(\xi) \}.
\end{align*}
Note that $\mathcal{I}_x[k] \neq \emptyset$ for all $k \geq \max \{ k_1^*, k_2^* \}$ by the assumption. In addition, from Lemma \ref{lem: leq R_i} and Lemma \ref{lem: leq R_i to s_i}, we have that for all $v_i \in \mathcal{R}$ and $k \geq \max \{ k_1^*, k_2^* \}$, if $\| z_i[k] - \hat{x} \|_2 \leq s^*(\xi)$ then $\| x_i[k+1] - \hat{x} \|_2   \leq s^*(\xi)$. This implies that for all $k \geq \max \{ k_1^*, k_2^* \}$, $\emptyset \neq \mathcal{I}_x [k+1] \subseteq \mathcal{I}_z[k]$.

From Lemma \ref{lem: z_to_x}, for $k \geq \max \{ k_1^*, k_2^* \}$ and $v_i \in \mathcal{I}_z[k]$,
\begin{multline*}
    \| x_i[k+1] - \hat{x} \|_2^2 \leq \| z_i[k] - \hat{x} \|_2^2 \\
    - \Delta_i( \| z_i[k] - \hat{x} \|_2, \; \eta[k] \; \| g_i[k] \|_2 ).  
\end{multline*}
For any $k \geq \max \{ k_1^*, k_2^*  \}$, we have
\begin{align}
    &\quad \max_{v_i \in \mathcal{I}_z[k]} \| x_i[k+1] - \hat{x} \|_2^2 \nonumber \\
    &\leq \max_{v_i \in \mathcal{I}_z[k]} \Big( \| z_i[k] - \hat{x} \|_2^2 \nonumber \\
    & \qquad \qquad - \Delta_i( \| z_i[k] - \hat{x} \|_2,  \eta[k] \; \| g_i[k] \|_2 ) \Big)  \nonumber \\
    &\leq \max_{v_i \in \mathcal{I}_z[k]}  \| z_i[k] - \hat{x} \|_2^2  \nonumber \\
    & \qquad \qquad - \min_{v_i \in \mathcal{I}_z[k]} \Delta_i( \| z_i[k] - \hat{x} \|_2, \; \eta[k] \; \| g_i[k] \|_2 ) .  \label{eqn: max-min}
\end{align}
Consider the first term on the RHS of \eqref{eqn: max-min}. By the definition of $\mathcal{I}_z[k]$, we have
\begin{align*}
    \max_{v_i \in \mathcal{I}_z[k]}  \| z_i[k] - \hat{x} \|_2^2 = \max_{v_i \in \mathcal{R}}  \| z_i[k] - \hat{x} \|_2^2.
\end{align*}
Consider the term on the LHS of \eqref{eqn: max-min}. Since $\emptyset \neq \mathcal{I}_x [k+1] \subseteq \mathcal{I}_z[k]$ and the definition of $\mathcal{I}_x[k+1]$, we have
\begin{align*}
    \max_{v_i \in \mathcal{I}_z[k]} \| x_i[k+1] - \hat{x} \|_2^2 
    &\geq  \max_{v_i \in \mathcal{I}_x[k+1]} \| x_i[k+1] - \hat{x} \|_2^2 \\
    &= \max_{v_i \in \mathcal{R}} \| x_i[k+1] - \hat{x} \|_2^2.
\end{align*}
Therefore, the inequality \eqref{eqn: max-min} becomes
\begin{multline}
    \max_{v_i \in \mathcal{R}} \| x_i[k+1] - \hat{x} \|_2^2 \leq \max_{v_i \in \mathcal{R}}  \| z_i[k] - \hat{x} \|_2^2  \\
    - \min_{v_i \in \mathcal{I}_z[k]} \Delta_i( \| z_i[k] - \hat{x} \|_2, \; \eta[k] \; \| g_i[k] \|_2 ) .  \label{eqn: max-min2}
\end{multline}

Consider the definition of the function $\Delta_i$ in \eqref{def: delta}. It is clear that if $p_1 > p_2 \geq \Tilde{R}_i$ then $\Delta_i( p_1, l) > \Delta_i( p_2, l)$.
Then, for all $k \geq \max \{ k_1^*, k_2^*  \}$ and $v_i \in \mathcal{I}_z[k]$, we get 
\begin{align}
    \Delta_i( \| z_i[k] - \hat{x} \|_2, \eta[k] \; \| g_i[k] \|_2 ) > \Delta_i (  s^*(\xi), \eta[k] \; \| g_i[k] \|_2 ).  \label{eqn: delta_ineq}
\end{align}
Furthermore, the function $\Delta_i$ satisfies
\begin{multline}
    \Delta_i ( p, l) \geq \Big( \sqrt{ p^2 - \Tilde{R}_i^2 } \cos \theta_i - \Tilde{R}_i \sin \theta_i \Big) l \\
    \Longleftrightarrow \quad l \in \Big[ 0, \; \sqrt{ p^2 - \Tilde{R}_i^2 } \cos \theta_i - \Tilde{R}_i \sin \theta_i  \Big].  \label{eqn: l_range}
\end{multline}

For $x \notin \mathcal{C}_i(\epsilon)$, from the definition of convex functions, we have
$- \langle g_i(x), \; x_i^* - x \rangle \geq f_i(x) - f_i(x_i^*)$.  
Using the inequality \eqref{eqn: lb_grad}, we obtain
\begin{align}
    \| g_i (x) \|_2 \geq
    \frac{f_i(x) - f_i(x_i^*)}{\| x - x_i^* \|_2}
    \geq \frac{\epsilon}{\delta_i(\epsilon)} \triangleq \kappa_i (\epsilon) > 0, \label{eqn: grad_lb}  
\end{align}
where $\mathcal{C}_i(\epsilon)$ is defined in \eqref{def: sublevel_set}. Let $k_3^*$ be such that 
\begin{align*}
    \eta[k_3^*] \leq \frac{1}{2L} \min_{v_i \in \mathcal{R}} b_i.
\end{align*}
Using \eqref{eqn: grad_lb}, Assumption \ref{ass: gradient_bound}, and the definition of $k_3^*$, we have that for all $k \geq k_3^*$ and $v_i \in \mathcal{I}_z[k]$, 
\begin{align}
    \eta[k] \kappa_i \leq \eta[k] \; \| g_i[k] \|_2 \leq \eta[k] L \leq \frac{b_i}{2}.  \label{eqn: lb_ub}
\end{align}
Since $\eta[k] \; \| g_i[k] \|_2 \in [ 0, \; \frac{b_i}{2}  ]$, we can apply \eqref{eqn: l_range} to get
\begin{align}
    \Delta_i \big(  s^*(\xi), \; \eta[k] \; \| g_i[k] \|_2 \big) \geq \frac{b_i}{2} \eta[k] \; \| g_i[k] \|_2.  \label{eqn: delta_geq}
\end{align}
Let $k^* = \max \{  k_1^*, k_2^*, k_3^* \}$.
Combine \eqref{eqn: delta_ineq}, \eqref{eqn: delta_geq}, and the first inequality of \eqref{eqn: lb_ub} to get that for all $k \geq k^*$ and $v_i \in \mathcal{I}_z[k]$, 
\begin{align}
    \Delta_i( \| z_i[k] - \hat{x} \|_2, \eta[k] \; \| g_i[k] \|_2 ) > \frac{1}{2} b_i \kappa_i \eta[k]  .  \label{eqn: grad_chain2}
\end{align}
On the other hand, from Proposition~\ref{prop: alg prop}, we have that for all $v_i \in \mathcal{R}$,
\begin{align*}
    \| z_i[k] - \hat{x} \|_2 
    &\leq \max_{v_j \in (\mathcal{N}_i^- \cap \mathcal{R}) \cup \{ v_i \} } \| x_j[k] - \hat{x} \|_2 \\
    &\leq \max_{v_j \in \mathcal{R}} \| x_j[k] - \hat{x} \|_2 ,
\end{align*}
which implies that 
\begin{align*}
     \max_{v_i \in \mathcal{R}} \| z_i[k] - \hat{x} \|_2 \leq \max_{v_i \in \mathcal{R}} \| x_i[k] - \hat{x} \|_2.
\end{align*}
Apply the above inequality and the inequality \eqref{eqn: grad_chain2} to \eqref{eqn: max-min2} to get that for all $k \geq k^*$,
\begin{align}
    & \quad \max_{v_i \in \mathcal{R}} \| x_i[k+1] - \hat{x} \|_2^2 \nonumber \\
    &< \max_{v_i \in \mathcal{R}}  \| x_i[k] - \hat{x} \|_2^2  
    - \frac{1}{2} \min_{v_i \in \mathcal{I}_z[k]} b_i \kappa_i  \eta[k] \nonumber \\
    &\leq \max_{v_i \in \mathcal{R}}  \| x_i[k] - \hat{x} \|_2^2  
    - \frac{1}{2} \min_{v_i \in \mathcal{R}} b_i \kappa_i  \eta[k].
    \label{eqn: max-min4}
\end{align}
Apply the inequality \eqref{eqn: max-min4} recursively to get that for $K \geq 1$,
\begin{multline*}
    \max_{v_i \in \mathcal{R}} \| x_i[k^*+K] - \hat{x} \|_2^2 < \max_{v_i \in \mathcal{R}}  \| x_i[k^*] - \hat{x} \|_2^2 \\
    - \frac{1}{2} \min_{v_i \in \mathcal{R}} b_i \kappa_i   \sum_{k=k^*}^{k^* + K - 1} \eta[k] . 
\end{multline*}
Since $\sum_{k=1}^{\infty} \eta [k] = \infty$ from Assumption \ref{ass: step-size}, we have that there exists $K^*$ such that 
\begin{align*}
    \max_{ v_i \in \mathcal{R}} \| x_i[k^* + K^*] - \hat{x} \|_2^2 \leq \big( s^*(\xi) \big)^2.
\end{align*}
This implies that for all $v_i \in \mathcal{R}$,
\begin{align*}
    \| x_i[k^* + K^*] - \hat{x} \|_2^2 \leq \big( s^*(\xi) \big)^2
\end{align*}
which contradicts our assumption.
\end{proof}

We are now in place to prove Theorem \ref{thm: main} by combining the results from Lemma \ref{lem: leq R_i}, \ref{lem: leq R_i to s_i}, and \ref{lem: geq s}. Intuitively, Lemma \ref{lem: geq s} states that there is a time-step $k$ that $x_i[k] \in \mathcal{B}( \hat{x}, s^*(\xi) )$ for all $v_i \in \mathcal{R}$. Then, by applying Lemma \ref{lem: leq R_i} and \ref{lem: leq R_i to s_i}, and Proposition~\ref{prop: alg prop}, we can conclude that the state $x_i[k]$ will remain inside $\mathcal{B}( \hat{x}, s^*(\xi) )$ for all subsequent time-steps.  

\begin{proof}[Proof of Theorem \ref{thm: main}]
Let $k^* \geq \max \big\{ k_1^*, k_2^* \big\}$ be the time index that satisfies Lemma \ref{lem: geq s}, i.e., 
\begin{align*}
     \max_{v_i \in \mathcal{R}} \| x_i[k^*] - \hat{x} \|_2 \leq  s^*(\xi).
\end{align*}
Applying Proposition~\ref{prop: alg prop}, we get
\begin{align}
    \max_{v_i \in \mathcal{R}} \| z_i[k^*] - \hat{x} \|_2  \leq \max_{v_i \in \mathcal{R}} \| x_i[k^*] - \hat{x} \|_2 \leq  s^*(\xi).  \label{eqn: max_z}
\end{align}
On the other hand, from Lemma \ref{lem: leq R_i} and \ref{lem: leq R_i to s_i}, we have that for $k \geq \max \{ k_1^* ,  k_2^* \}$, if $\| z_i[k] - \hat{x} \|_2 \leq s^*(\xi)$ then $\| x_i[k+1] - \hat{x} \|_2   \leq s^*(\xi)$. Therefore, the inequality \eqref{eqn: max_z} implies that 
\begin{align*}
    \max_{v_i \in \mathcal{R}} \| x_i[k^* + 1] - \hat{x} \|_2  \leq   s^*(\xi).  
\end{align*}
Applying the above procedure recursively, we get
\begin{align*}
    \| x_i[k] - \hat{x} \|_2  \leq   s^*(\xi)  
    \quad \text{for all} \;\; k \geq k^* \quad \text{and} \;\; v_i \in \mathcal{R}.
\end{align*}
However, since we can choose $\xi$ to be arbitrary small, we have that for all $v_i \in \mathcal{R}$,
\begin{align*}
    \limsup_k \| x_i[k] - \hat{x} \|_2 &\leq s^*(0) \\
    &= \max_{v_j \in \mathcal{R}} \big\{ \max \{ \Tilde{R}_j \sec \theta_j , \Tilde{R}_j + \delta_j \} \big\}  .
\end{align*}
Furthermore, the variables $\theta_i (\epsilon)$ and $\delta_i(\epsilon)$ depend on $\epsilon$ which is defined in \eqref{def: sublevel_set} and the above inequality holds for any $\epsilon > 0$. Hence, for all $v_i \in \mathcal{R}$,
\begin{multline*}
    \limsup_k \| x_i[k] - \hat{x} \|_2 \\
    \leq \inf_{\epsilon > 0} \Big\{ \max_{v_j \in \mathcal{R}} \big\{ \max \{ \Tilde{R}_j \sec \theta_j(\epsilon) , \; \Tilde{R}_j + \delta_j(\epsilon) \} \big\} \Big\} .
\end{multline*}
\end{proof}




\bibliographystyle{IEEEtran}
\bibliography{main}

\end{document}